\pdfoutput=1
\RequirePackage{ifpdf}
\ifpdf
\documentclass[pdftex]{sigma}
\else
\documentclass{sigma}
\fi

\numberwithin{equation}{section}

\newtheorem{Theorem}{Theorem}[section]
\newtheorem{Corollary}[Theorem]{Corollary}
\newtheorem{Lemma}[Theorem]{Lemma}
\newtheorem{Proposition}[Theorem]{Proposition}

\begin{document}


\newcommand{\arXivNumber}{1508.06587}

\renewcommand{\PaperNumber}{007}

\FirstPageHeading

\ShortArticleName{Automorphisms of ${\mathbb C}^*$ Moduli Spaces Associated to a Riemann Surface}

\ArticleName{Automorphisms of $\boldsymbol{{\mathbb C}^*}$ Moduli Spaces\\ Associated to a Riemann Surface}

\Author{David {BARAGLIA}~$^\dag$, Indranil {BISWAS}~$^\ddag$ and Laura P.~{SCHAPOSNIK}~$^{\S}$}

\AuthorNameForHeading{D.~Baraglia, I.~Biswas and L.P.~Schaposnik}

\Address{$^\dag$~School of Mathematical Sciences, The University of Adelaide, Adelaide SA 5005, Australia}
\EmailD{\href{mailto:david.baraglia@adelaide.edu.au}{david.baraglia@adelaide.edu.au}}

\Address{$^\ddag$~School of Mathematics, Tata Institute of Fundamental Research,\\
\hphantom{$^\ddag$}~Homi Bhabha Road, Mumbai 400005, India}
\EmailD{\href{mailto:indranil@math.tifr.res.in}{indranil@math.tifr.res.in}}

\Address{$^{\S}$~Department of Mathematics, University of Illinois, Chicago, IL 60607, USA}
\EmailD{\href{mailto:schapos@uic.edu}{schapos@uic.edu}}

\ArticleDates{Received August 27, 2015, in f\/inal form January 15, 2016; Published online January 20, 2016}

\Abstract{We compute the automorphism groups of the Dolbeault, de Rham and Betti moduli spaces for
the multiplicative group ${\mathbb C}^*$ associated to a compact connected Riemann surface.}

\Keywords{holomorphic connection; Higgs bundle; character variety; automorphism}

\Classification{14H60; 14J50; 53D30}

\section{Introduction}\label{sec1}

Let $X$ be a compact connected Riemann surface of genus $g$, where $g \geq  1$.
Let ${\mathbb C}^* =  {\mathbb C}{\setminus}\{0\}$ be the multiplicative group. We are
interested in studying the automorphism groups of certain ${\mathbb C}^*$-moduli spaces
associated to $X$, arising from non-abelian Hodge theory. Namely these are the de Rham,
Betti and Dolbeault moduli spaces ${\mathcal M}_C$, ${\mathcal M}_R$, ${\mathcal M}_H$
parametrizing holomorphic ${\mathbb C}^*$-connections, representations of the fundamental
group into ${\mathbb C}^*$ and degree zero Higgs line bundles respectively. While these
three moduli spaces are all homeomorphic, their algebraic structures are quite dif\/ferent
(${\mathcal M}_C$ and ${\mathcal M}_H$ are not even biholomorphic)
and we f\/ind that their automorphism groups are also quite dif\/ferent.

In \cite{Ba}, a classif\/ication was obtained of the analytic automorphism groups of the
moduli space of ${\rm SL}(n,\mathbb{C})$-Higgs bundles, i.e., the ${\rm SL}(n,\mathbb{C})$
Dolbeault moduli space. It remains an open question to determine which of the analytic
automorphisms found in~\cite{Ba} are algebraic and also to determine the corresponding
automorphism groups for the ${\rm SL}(n,\mathbb{C})$ de Rham and Betti moduli spaces (note
that de Rham and Betti moduli spaces are analytically but not algebraically
isomorphic). As mentioned above, the goal of this paper is to address this classif\/ication problem for the
corresponding ${\mathbb C}^*$-moduli spaces. We leave the task of extending our
results to noncommutative reductive groups as an interesting and challenging open problem.

Motivation for studying the automorphisms of these moduli spaces arises from mirror
symmetry, the geometric Langlands program and their relation to physics, as promoted in
the celebrated work of Kapustin and Witten~\cite{KW}. Namely, one is interested in the
construction of examples of naturally def\/ined subvarieties of these moduli spaces,
known as {\em branes} in the language of physics. One way of constructing such
subvarieties which has proved fruitful is as the f\/ixed point set of an automorphism of
the moduli space, as seen in~\cite{BS1,BS2}. This has lead us to consider the problem
of determining the automorphism groups of these moduli spaces in order to see how
general our constructions are.

In what follows we shall describe the structure and results of this paper. We begin this
paper by studying in Section~\ref{sec2} the structure of the de Rham
moduli space ${\mathcal M}_C$ of holomorphic ${\mathbb C}^*$-connections on~$X$ up to
gauge equivalence, i.e., pairs~$(L,D)$ where $L$ is a holomorphic line bundle and~$D$ is
a holomorphic connection on~$L$. After recalling properties
of the space, we give in Proposition~\ref{prop1} a gauge theoretic
proof of the known result that every algebraic function on~$\mathcal{M}_{C}$ is constant.

The moduli space ${\mathcal M}_C$ is a complex algebraic group with multiplication given by taking the
tensor product of line bundles with connections, and thus ${\mathcal M}_C$ acts on itself
by translations giving an injective homomorphism
\begin{gather*}
\rho\colon \  {\mathcal M}_C \longrightarrow  \operatorname{Aut}({\mathcal M}_C)  ,
\end{gather*}
where $\operatorname{Aut}({\mathcal M}_C)$ denote the group of algebraic
automorphisms of ${\mathcal M}_C$. This map is considered in Section~\ref{sec3}, where we show the following (see Theorem~\ref{thm1}):

\begin{Theorem}
The quotient $\operatorname{Aut}({\mathcal M}_C)/(\rho({\mathcal M}_C))$ is a countable group.
In particular, the image of $\rho$ is the connected component of
$\operatorname{Aut}({\mathcal M}_C)$ containing the identity element.
\end{Theorem}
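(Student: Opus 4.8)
The plan is to exploit the description of $\mathcal{M}_C$ as a commutative algebraic group sitting in a short exact sequence
\begin{gather*}
0 \longrightarrow V \longrightarrow \mathcal{M}_C \stackrel{\pi}{\longrightarrow} J \longrightarrow 0,
\end{gather*}
where $J = \operatorname{Pic}^0(X)$ is the Jacobian of $X$ (an abelian variety of dimension $g$), the projection $\pi$ sends $(L,D)$ to $L$, and the kernel $V = H^0(X, \Omega^1_X) \cong \mathbb{C}^g$ is the additive group of holomorphic connections on the trivial bundle. Since $\rho(\mathcal{M}_C)$ is the group of translations, every right coset of $\rho(\mathcal{M}_C)$ in $\operatorname{Aut}(\mathcal{M}_C)$ has a unique representative fixing the identity element $0 \in \mathcal{M}_C$: given $\phi$, replace it by $\tau_{-\phi(0)} \circ \phi$. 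Thus the first step reduces the whole problem to analysing the stabiliser $\operatorname{Aut}(\mathcal{M}_C, 0)$ of the identity, and the goal becomes to prove that this stabiliser is countable.

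The heart of the argument is a rigidity statement: every $\phi \in \operatorname{Aut}(\mathcal{M}_C, 0)$ is in fact an automorphism of the algebraic group $\mathcal{M}_C$. I would prove this in two moves, both powered by Proposition~\ref{prop1} (every regular function on $\mathcal{M}_C$ is constant). First, restricting $\pi \circ \phi$ to a fibre of $\pi$ gives a morphism $\mathbb{C}^g \to J$ from an affine space to an abelian variety, which is necessarily constant; hence $\pi \circ \phi$ is constant along the fibres of $\pi$ and descends to a morphism $\bar\phi \colon J \to J$ with $\pi \circ \phi = \bar\phi \circ \pi$. As $\phi$ is an automorphism fixing $0$ and $\pi$ is surjective with connected fibres, $\bar\phi$ is an automorphism of $J$ fixing $0$, hence a group automorphism by rigidity of abelian varieties. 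Second, consider the morphism $\Phi \colon \mathcal{M}_C \times \mathcal{M}_C \to \mathcal{M}_C$ defined by $\Phi(x,y) = \phi(x+y) - \phi(x) - \phi(y)$. Applying $\pi$ and using that $\bar\phi$ is a homomorphism shows $\pi \circ \Phi \equiv 0$, so $\Phi$ takes values in $V \cong \mathbb{C}^g$; its components are then regular functions on $\mathcal{M}_C \times \mathcal{M}_C$, hence constant, and evaluation at $(0,0)$ gives $\Phi \equiv 0$. Therefore $\phi$ is additive, i.e., a group automorphism. This rigidity step is the main obstacle, and it is exactly where the anti-affineness supplied by Proposition~\ref{prop1} is indispensable.

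It remains to bound $\operatorname{Aut}(\mathcal{M}_C, 0)$. The assignment $\phi \mapsto \bar\phi$ defines a homomorphism to $\operatorname{Aut}(J)$, the group of automorphisms of the algebraic group $J$. I would show it is injective: if $\bar\phi = \operatorname{id}_J$, then $\phi$ is a group automorphism covering the identity, so $x \mapsto \phi(x) - x$ is a morphism $\mathcal{M}_C \to V \cong \mathbb{C}^g$, again constant by Proposition~\ref{prop1} and hence identically $0$, giving $\phi = \operatorname{id}$. Since the automorphisms of $J$ form a subgroup of the endomorphism ring $\operatorname{End}(J)$, which is a finitely generated abelian group, this target is countable; thus $\operatorname{Aut}(\mathcal{M}_C, 0)$ is countable and so is $\operatorname{Aut}(\mathcal{M}_C)/\rho(\mathcal{M}_C)$. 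Finally, the rigidity statement shows that an arbitrary automorphism is a translation composed with a group automorphism, whence $\rho(\mathcal{M}_C)$ is normal and the displayed quotient is genuinely a group; being connected with countable, hence discrete, quotient, $\rho(\mathcal{M}_C)$ is the identity component of $\operatorname{Aut}(\mathcal{M}_C)$.
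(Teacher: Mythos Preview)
Your argument is correct and shares its essential ingredients with the paper's proof: both show that every automorphism of $\mathcal{M}_C$ descends to an automorphism of $J(X)$ (because the fibres $\mathbb{C}^g$ admit no nonconstant morphism to an abelian variety), and both use Proposition~\ref{prop1} to pin down what lies over the identity on $J(X)$. The organisation differs slightly. The paper maps $\operatorname{Aut}(\mathcal{M}_C)$ into the full variety automorphism group $\operatorname{Aut}(J(X))$ and applies the snake lemma: countability of $\operatorname{Aut}(J(X))/\rho_0(J(X))$ together with the identification $\text{kernel}(\delta)=\rho(\text{kernel}(\varphi))$ (proved, exactly as you do, by showing via Proposition~\ref{prop1} that $z\mapsto\psi(z)-z$ is constant) gives the result directly. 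You instead normalise by a translation to reduce to the stabiliser of $0$, prove the stronger rigidity statement that any automorphism fixing $0$ is a \emph{group} automorphism of $\mathcal{M}_C$, and then embed this stabiliser into the group automorphisms of $J$, which sit inside the finitely generated abelian group $\operatorname{End}(J)$. Your route buys the extra rigidity conclusion and makes normality of $\rho(\mathcal{M}_C)$ transparent; the paper's route is a line shorter. One small point to make explicit: when you declare the components of $\Phi$ constant on $\mathcal{M}_C\times\mathcal{M}_C$, you are using Proposition~\ref{prop1} on a product rather than on $\mathcal{M}_C$ itself; this follows at once by the slice argument (fix one variable, apply the proposition, then let that variable move), but deserves a sentence.
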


Let $J(X)$ be the Jacobian of $X$ and let $\rho_0 \colon J(X) \longrightarrow  \operatorname{Aut}(J(X))$ be the homomorphism given by letting $J(X)$ act on itself by translation.
In Section~\ref{sec3}, it is found that the quotient $\operatorname{Aut}({\mathcal
M}_C)/(\rho({\mathcal M}_C))$ can be identif\/ied with a subgroup of $\operatorname{Aut}(J(X))/\rho_0(J(X))$.

From non-abelian Hodge theory it is seen that the moduli space ${\mathcal M}_C$
carries a naturally def\/ined algebraic symplectic form~\cite{atiyah,Go}. Let $\theta \in H^2({\mathcal
M}_C , \mathbb{C})$ denote the cohomology class of the symplectic form and let
$\operatorname{Aut}_\theta({\mathcal M}_C)$ be the subgroup of $\operatorname{Aut}({\mathcal M}_C)$ preserving
$\theta$. In Section~\ref{sec3} we study this subgroup, and give its complete characterization in Theorem~\ref{thm2}.
For this, consider the homomorphism
\begin{gather*}
\rho_C\colon \  \operatorname{Aut}(X) \longrightarrow  \operatorname{Aut}({\mathcal M}_C)
\end{gather*}
def\/ined by sending $h \in  \operatorname{Aut}(X)$ to the automorphism
of ${\mathcal M}_C$ given by
$(L ,D)  \longmapsto (h^*L ,h^*D)$. We show in Section~\ref{sec3-1} that $\rho_C$ is injective if $g \geq  2$.
Let $G$ denote the subgroup of $\operatorname{Aut}({\mathcal M}_C)$ generated by
$\rho_C(\operatorname{Aut}(X))$ together with the inversion $(L ,D) \longmapsto
(L^\vee ,D^\vee)$ of the group ${\mathcal M}_C$; we denote the dual of a vector
bundle, a vector space or a homomorphism by the superscript~``$\vee$''. Using the
actions of~$G$ and $\rho_C(\operatorname{Aut}(X))$ on ${\mathcal M}_C$, consider the
semi-direct products
\begin{gather*}
{\mathcal G}_0  :=  {\mathcal M}_C\rtimes\rho_C(\operatorname{Aut}(X))
\qquad \text{and} \qquad {\mathcal G}  :=  {\mathcal M}_C\rtimes G  .
\end{gather*}
Through these groups we can characterize $\operatorname{Aut}_\theta({\mathcal M}_C)$ (see Theorem~\ref{thm2}):

\begin{Theorem} The group $\operatorname{Aut}_\theta({\mathcal M}_C)$ is given by
\begin{enumerate}\itemsep=0pt
\item[$1)$] $\operatorname{Aut}_\theta({\mathcal M}_C)
 =  {\mathcal G}$ if $X$ is not hyperelliptic;

\item[$2)$] $\operatorname{Aut}_\theta({\mathcal M}_C)  =  {\mathcal G}_0$ if $X$ is hyperelliptic.
\end{enumerate}
\end{Theorem}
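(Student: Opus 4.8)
\section*{Proof proposal}

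The plan is to reduce everything to the Jacobian via the structure results already established, and then invoke the Torelli theorem. First I would record that every translation, being an element of the connected identity component $\rho(\mathcal{M}_C)$ of $\operatorname{Aut}(\mathcal{M}_C)$ (Theorem~\ref{thm1}), acts trivially on $H^2(\mathcal{M}_C,\mathbb{C})$ and hence preserves $\theta$; thus $\rho(\mathcal{M}_C)\subseteq \operatorname{Aut}_\theta(\mathcal{M}_C)$, and it suffices to determine the image of $\operatorname{Aut}_\theta(\mathcal{M}_C)$ in the countable quotient $\operatorname{Aut}(\mathcal{M}_C)/\rho(\mathcal{M}_C)$. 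By Theorem~\ref{thm1} this quotient embeds into $\operatorname{Aut}(J(X))/\rho_0(J(X))$, the group of automorphisms of $J(X)$ fixing the origin, i.e.\ group automorphisms of the abelian variety $J(X)$. Because $\operatorname{Hom}(J(X),\mathbb{G}_a)=0$, the projection $\pi\colon\mathcal{M}_C\to J(X)$ forgetting the connection exhibits $\mathcal{M}_C$ as the universal vector extension of $J(X)$, and each origin-fixing automorphism of $\mathcal{M}_C$ is the unique functorial lift of the induced $\overline\phi\in\operatorname{Aut}(J(X))$; I would use this to move freely between $\mathcal{M}_C$ and $J(X)$.

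The technical heart is to translate the condition ``$\phi$ preserves $\theta$'' into ``$\overline\phi$ preserves the principal polarization $\Theta$ of $J(X)$''. Since $\pi$ is an affine bundle with contractible fibres modelled on $H^0(X,\Omega^1_X)$, it is a homotopy equivalence and $\pi^*$ identifies $H^2(\mathcal{M}_C,\mathbb{C})$ with $H^2(J(X),\mathbb{C})=\wedge^2 H^1(X,\mathbb{C})$; moreover the tangent space $T_0\mathcal{M}_C$ is canonically $H^1_{\mathrm{dR}}(X)$, fitting in $0\to H^0(X,\Omega^1_X)\to H^1_{\mathrm{dR}}(X)\to H^1(X,\mathcal{O}_X)\to 0$, and the symplectic form at the origin is the cup-product pairing $\omega_0$ on $H^1_{\mathrm{dR}}(X)$. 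I would then show that an origin-fixing $\phi$ preserves $\theta$ precisely when $d\phi_0$ lies in $\operatorname{Sp}(H^1_{\mathrm{dR}}(X),\omega_0)$, and that for the functorial lift this holds exactly when $\overline\phi$ preserves the intersection form on $H^1(X,\mathbb{Z})$, equivalently $\overline\phi\in\operatorname{Aut}(J(X),\Theta)$. The subtle point is that a general group automorphism of $J(X)$ need not be symplectic: the fibre action of the lift is governed by the dual automorphism $\overline\phi^\vee$ on $J(X)^\vee$, and compatibility with $\omega_0$ forces exactly the polarization-preserving condition. This is the step I expect to be the main obstacle, as it requires pinning down $\theta$ as the polarization class under the above identifications.

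With this in hand the classification follows from the automorphism form of the Torelli theorem: for $g\geq 2$ the group $\operatorname{Aut}(J(X),\Theta)$ is generated by the automorphisms induced by $\operatorname{Aut}(X)$ together with $-1$ (the inversion of the group $J(X)$), and $-1$ lies in the image of $\operatorname{Aut}(X)$ if and only if $X$ is hyperelliptic, the hyperelliptic involution inducing $-1$ on $J(X)$. I would check directly that both $\rho_C(h)$ for $h\in\operatorname{Aut}(X)$ (since $h^*$ preserves the cup product on $H^1_{\mathrm{dR}}(X)$) and the inversion $(L,D)\mapsto(L^\vee,D^\vee)$ (which acts as $-1$, hence trivially, on $\wedge^2 H^1$) preserve $\theta$, so that $G$ maps into $\operatorname{Aut}_\theta(\mathcal{M}_C)/\rho(\mathcal{M}_C)$ and therefore $\mathcal{G},\mathcal{G}_0\subseteq\operatorname{Aut}_\theta(\mathcal{M}_C)$.

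Finally I would assemble the two inclusions. Given $\phi\in\operatorname{Aut}_\theta(\mathcal{M}_C)$, composing with a translation makes it fix the origin; its image $\overline\phi$ preserves $\Theta$ by the heart of the argument, so Torelli gives $\overline\phi=\overline{\rho_C(h)}$ or $\overline\phi=\overline{\rho_C(h)}\cdot(-1)$ for some $h\in\operatorname{Aut}(X)$. Uniqueness of the lift, i.e.\ injectivity of $\operatorname{Aut}(\mathcal{M}_C)/\rho(\mathcal{M}_C)\hookrightarrow\operatorname{Aut}(J(X))/\rho_0(J(X))$, then identifies $\phi$ up to a translation with an element of $G$, giving $\operatorname{Aut}_\theta(\mathcal{M}_C)=\mathcal{M}_C\rtimes G=\mathcal{G}$. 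When $X$ is hyperelliptic the inversion already agrees, modulo translation, with $\rho_C(\iota)$ for the hyperelliptic involution $\iota$, so $G=\rho_C(\operatorname{Aut}(X))$ and $\mathcal{G}=\mathcal{G}_0$, yielding case~(2); when $X$ is not hyperelliptic the inversion is a genuinely new coset and one obtains case~(1).
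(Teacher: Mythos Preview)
Your proposal is correct and follows essentially the same route as the paper: descend to $J(X)$ via the homomorphism $\delta$ of Theorem~\ref{thm1}, translate preservation of $\theta$ into preservation of the principal polarization, and then invoke the Torelli theorem (the paper cites Weil's formulation). The only notable difference is that the paper dispatches your ``technical heart'' in one line, having already recorded just before the theorem that $\theta=\varphi^*\widehat{\omega}$ with $\varphi$ a homotopy equivalence, so $\phi^*\theta=\theta$ is immediately equivalent to $\overline{\phi}^*\widehat{\omega}=\widehat{\omega}$; your tangent-space and dual-automorphism analysis is correct but more elaborate than needed once that identity is in hand.
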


As a Corollary, we deduce that any automorphism of ${\mathcal M}_C$ preserving the
cohomology class $\theta$ actually preserves the symplectic form and so the above
theorem also gives the group of algebraic symplectomorphisms of~${\mathcal M}_C$.

In Section \ref{sec4} we consider the Betti moduli space ${\mathcal
M}_R$ of representations of $\pi_1(X)$
into the multiplicative group $\mathbb{C}^*$ (following~\cite{simpson}). The space ${\mathcal M}_R  =  \operatorname{Hom}( \pi_1(X) ,
\mathbb{C}^*)$, which is
isomorphic to $( \mathbb{C}^* )^{2g}$. The group $\Gamma$ of automorphisms of
the $\mathbb{Z}$-module $H_1(X,\mathbb{Z})$ is isomorphic to ${\rm GL}(2g ,
 \mathbb{Z})$, and thus there is a natural map
\begin{gather*}
f\colon \  \operatorname{Aut}({\mathcal M}_R) \longrightarrow  \Gamma,
\end{gather*}
that sends an automorphism of ${\mathcal M}_R$ to its induced
action on $H_1(X , \mathbb{Z})$. In Section~\ref{sec4}, we show that $f$ admits a
right-splitting so that $\operatorname{Aut}({\mathcal M}_R)= \text{kernel}(f) \rtimes \Gamma$. Moreover, since the kernel of~$f$ is given by the natural action of ${\mathcal M}_R =
 (\mathbb{C}^*)^{2g}$ on itself by translations, we obtain that (see Theorem~\ref{thm3}):

\begin{Theorem}
The automorphism group $\operatorname{Aut}({\mathcal M}_R)$ is the semi-direct product
${\mathcal M}_R\rtimes\Gamma$.
\end{Theorem}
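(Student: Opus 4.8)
The plan is to establish the two ingredients announced just before the statement: that $f$ admits a right-splitting and that $\ker f$ is exactly the translation subgroup $\mathcal{M}_R$. The starting point is the observation that, $\mathbb{C}^*$ being abelian, every representation $\pi_1(X)\to\mathbb{C}^*$ factors through $H_1(X,\mathbb{Z})$, so $\mathcal{M}_R=\operatorname{Hom}(H_1(X,\mathbb{Z}),\mathbb{C}^*)$ is the algebraic torus whose coordinate ring is the group algebra $\mathbb{C}[H_1(X,\mathbb{Z})]$; after choosing a basis of $H_1(X,\mathbb{Z})\cong\mathbb{Z}^{2g}$ this is the Laurent ring $\mathbb{C}[x_1^{\pm1},\dots,x_{2g}^{\pm1}]$. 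The first step I would carry out is to record the description of its units: every invertible regular function is $c\cdot\chi$ for some $c\in\mathbb{C}^*$ and some character $\chi$ of the torus, so that the group of units modulo $\mathbb{C}^*$ is canonically identified with $H_1(X,\mathbb{Z})$. Each $\gamma\in H_1(X,\mathbb{Z})$ appears here as the evaluation character $\chi_\gamma\colon\rho\mapsto\rho(\gamma)$.

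With the units pinned down, both the map $f$ and its kernel can be read off. Any $\phi\in\operatorname{Aut}(\mathcal{M}_R)$ induces a ring automorphism $\phi^*$ of $\mathbb{C}[H_1(X,\mathbb{Z})]$, which necessarily permutes units and therefore descends to an automorphism of the lattice $H_1(X,\mathbb{Z})$ of units modulo $\mathbb{C}^*$; this automorphism is $f(\phi)\in\Gamma$. Now $\phi$ lies in $\ker f$ precisely when $\phi^*$ fixes each character up to a scalar, that is $\phi^*\chi_\gamma=c(\gamma)\,\chi_\gamma$. Multiplicativity of $\phi^*$ forces $\gamma\mapsto c(\gamma)$ to be a homomorphism $H_1(X,\mathbb{Z})\to\mathbb{C}^*$, i.e.\ a point of $\mathcal{M}_R$, and a direct check shows that $\phi$ is then exactly translation by that point; conversely every translation fixes all characters up to scalars. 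Hence $\ker f$ is the image of the translation action of $\mathcal{M}_R$ on itself.

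Finally I would produce the splitting using functoriality of $\operatorname{Hom}(-,\mathbb{C}^*)$: an automorphism $A$ of $H_1(X,\mathbb{Z})$ acts on $\mathcal{M}_R=\operatorname{Hom}(H_1(X,\mathbb{Z}),\mathbb{C}^*)$ by precomposition, $\rho\mapsto\rho\circ A^{-1}$, which is an algebraic automorphism of the torus. The resulting map $s\colon\Gamma\to\operatorname{Aut}(\mathcal{M}_R)$ is a group homomorphism (the inverse being inserted precisely to make it covariant) and satisfies $f\circ s=\operatorname{id}_\Gamma$, as one verifies by computing its effect on the characters $\chi_\gamma$. Thus $f$ is surjective and right-split, whence $\operatorname{Aut}(\mathcal{M}_R)=\ker f\rtimes s(\Gamma)=\mathcal{M}_R\rtimes\Gamma$, which is the assertion. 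The one genuinely substantive point, on which everything else rests, is the computation of $\ker f$: it is there that one must use control of the regular functions on $\mathcal{M}_R$, via the units of the Laurent ring, to exclude automorphisms that are neither translations nor lattice symmetries.
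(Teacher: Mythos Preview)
Your argument is correct and follows essentially the same route as the paper. Both proofs construct the right-splitting of $f$ by letting $\Gamma=\operatorname{Aut}(H_1(X,\mathbb{Z}))$ act on $\mathcal{M}_R=\operatorname{Hom}(H_1(X,\mathbb{Z}),\mathbb{C}^*)$ through its action on $H_1(X,\mathbb{Z})$ (the paper writes this in coordinates as $T(z)_i=\prod_j z_j^{a_{ij}}$, you phrase it functorially as $\rho\mapsto\rho\circ A^{-1}$), and both identify $\ker f$ with the translation subgroup. Your treatment of the kernel is in fact more explicit than the paper's: you spell out the key algebraic input, namely that the units of $\mathbb{C}[H_1(X,\mathbb{Z})]$ are exactly $\mathbb{C}^*\times H_1(X,\mathbb{Z})$, so that any $\phi\in\ker f$ must send each character $\chi_\gamma$ to $c(\gamma)\chi_\gamma$ and hence be the translation by the point $c\in\mathcal{M}_R$; the paper compresses this into the single sentence that the algebraic automorphisms of $(\mathbb{C}^*)^{2g}$ ``preserving every factor'' are the translations.
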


As with the de Rham moduli space, non-abelian Hodge theory determines a natural
symplectic form on~${\mathcal M}_R$. We f\/ind that the subgroup of $\operatorname{Aut}({\mathcal
M}_R)$ preserving this form is given by ${\mathcal M}_R \rtimes \Gamma_{\rm Sp}$,
where $\Gamma_{\rm Sp}$ is the subgroup of $\Gamma$ preserving the cap product on
$H_1(X ,  \mathbb{Z})$, so $\Gamma_{\rm Sp}$ is isomorphic to the symplectic group
$\text{Sp}(2g ,  \mathbb{Z})$.

Finally, in  Section \ref{sec5} we study the Dolbeault moduli space ${\mathcal M}_H$ of degree zero Higgs line bundles, that is pairs $(L,\Phi)$, where $L$ is a degree zero line bundle on $X$ and~$\Phi$ is a holomorphic $1$-form on~$X$. This moduli space is the holomorphic cotangent bundle
$T^\vee J(X)$ of the Jacobian $J(X)$. Considering the isomorphism $T^\vee J(X)
 =  J(X) \times H^0(X ,  K_X)$, where $K_X$ is the holomorphic cotangent bundle of $X$ we obtain that (see Lemma~\ref{lH}):

\begin{Lemma}
Any $f \in  \operatorname{Aut}({\mathcal M}_H)$ is of the form
\begin{gather*}
f =  f_1\times f_2  ,
\end{gather*}
where $f_1 \in  \operatorname{Aut}(J(X))$ and $f_2 \in  \operatorname{Aut}(H^0(X, K_X))$.
\end{Lemma}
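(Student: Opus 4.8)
The plan is to exploit the sharp contrast between the two factors of $\mathcal{M}_H = J(X)\times H^0(X,K_X)$: the Jacobian $J(X)$ is a complete (projective) abelian variety, while $V := H^0(X,K_X)$ is an affine space. Write $A := J(X)$. First I would identify the ring of global regular functions on $\mathcal{M}_H$. Restricting any such function to a slice $A\times\{v\}$ gives a regular function on the complete connected variety $A$, hence a constant; thus every global function is pulled back from $V$, and the natural morphism $\pi\colon \mathcal{M}_H \to \operatorname{Spec}\mathcal{O}(\mathcal{M}_H) = V$ is precisely the projection onto the second factor. Because this affinization morphism is canonical, it is equivariant under any $f \in \operatorname{Aut}(\mathcal{M}_H)$: there is an induced $f_2 \in \operatorname{Aut}(V)$ with $\pi\circ f = f_2\circ\pi$. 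In particular $f$ carries the fibre $A\times\{v\}$ isomorphically onto the fibre $A\times\{f_2(v)\}$.

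It remains to produce $f_1$ and to show that the restriction of $f$ to the fibres is independent of $v$. Writing $f(a,v) = (F(a,v),f_2(v))$ with $F\colon A\times V \to A$ a morphism, for each fixed $v$ the map $a \mapsto F(a,v)$ is an isomorphism of the variety $A$. By the structure theorem for automorphisms of an abelian variety, each such isomorphism is a translation composed with a group automorphism, so $F(a,v) = \psi_v(a) + c(v)$, where $\psi_v \in \operatorname{Aut}_{\mathrm{grp}}(A)$ and $c(v) := F(0,v)$. I would then argue that both pieces are constant in $v$: the assignment $v\mapsto\psi_v$ is a map from the connected variety $V$ into the group $\operatorname{Aut}_{\mathrm{grp}}(A)$, which is discrete since a group automorphism is determined by its action on the lattice $H_1(A,\mathbb{Z})$, and hence $\psi_v$ equals a fixed $\psi$; the translation part $c\colon V \to A$ is a morphism from an affine space into an abelian variety and is therefore constant, equal to some $c_0$, because $A$ contains no rational curves.

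Consequently $F(a,v) = \psi(a) + c_0$ is independent of $v$, so setting $f_1 := \psi(\,\cdot\,) + c_0 \in \operatorname{Aut}(A)$ we obtain $f = f_1\times f_2$, as claimed. The main obstacle is the rigidity step of the second paragraph: one must rule out any genuine variation of the fibrewise automorphism as $v$ ranges over $V$. Both halves of this rely on rigidity of the complete factor --- discreteness of $\operatorname{Aut}_{\mathrm{grp}}(J(X))$ for the linear part and the absence of nonconstant maps from affine space into an abelian variety for the translation part --- and it is exactly here that the projectivity of $J(X)$ against the affineness of $H^0(X,K_X)$ is doing the work.
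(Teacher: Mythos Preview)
Your proof is correct and rests on the same two rigidity facts the paper uses---no nonconstant morphisms from affine space to an abelian variety, and no nonconstant regular functions on a complete variety---but you organize them differently. The paper obtains $f_1$ and $f_2$ symmetrically and directly: composing $f$ with the projection to $J(X)$ and restricting to a slice $\{z\}\times H^0(X,K_X)$ gives a morphism from affine space to $J(X)$, hence a constant, so the first coordinate of $f$ depends only on the first input and one reads off $f_1$; dually, composing with the projection to $H^0(X,K_X)$ and restricting to $J(X)\times\{v\}$ gives a holomorphic map from the compact $J(X)$ to a vector space, hence a constant, yielding $f_2$. Your affinization argument for $f_2$ is a pleasant conceptual repackaging of the second step. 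For $f_1$, however, your route through the structure theorem $F(a,v)=\psi_v(a)+c(v)$, discreteness of $\operatorname{Aut}_{\mathrm{grp}}(J(X))$, and constancy of $c$ is a detour: once you have written $f(a,v)=(F(a,v),f_2(v))$, the very fact you invoke for $c$---that any morphism $V\to A$ is constant---applies verbatim to $v\mapsto F(a,v)$ for each fixed $a$, showing immediately that $F$ is independent of $v$ without ever decomposing the fibrewise automorphism. So your argument is sound but does more work than needed; the paper's version is the streamlined form of the same idea.
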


Since the moduli space ${\mathcal M}_H$ is the cotangent bundle of $J(X)$, it carries
a canonical symplectic form $\theta$. We shall denote by $\operatorname{Aut}_\theta({\mathcal M}_H)$ the
subgroup of $\operatorname{Aut}({\mathcal M}_H)$ preserving~$\theta$, and let~$\Omega_{J(X)}$
denote the holomorphic cotangent bundle of $J(X)$. Recalling that there is an isomorphism
$H^0( X ,  K_X )  =  H^0( J(X) , \Omega_{J(X)} )$, we conclude the paper showing that (see Theorem~\ref{thm4}):

\begin{Theorem}
The group $\operatorname{Aut}_\theta({\mathcal M}_H)$ is the semi-direct product
\begin{gather*}
H^0( J(X) ,
\Omega_{J(X)} ) \rtimes \operatorname{Aut}( J(X) ) ,
\end{gather*} where $\operatorname{Aut}( J(X) )$ acts on $H^0( J(X) ,
\Omega_{J(X)})$ by $f\cdot \alpha =  (f^{-1})^*(\alpha)$, for $f  \in \operatorname{Aut}(J(X))$,
$\alpha  \in  H^0(J(X) , \Omega_{J(X)} )$.
\end{Theorem}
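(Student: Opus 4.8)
The plan is to pass to the global flat trivialization of the cotangent bundle and impose the condition $f^*\theta=\theta$ directly. Since $J(X)=V/\Lambda$ is an abelian variety, its translation-invariant holomorphic $1$-forms trivialize $\Omega_{J(X)}$; under the identifications $\mathcal{M}_H=T^\vee J(X)=J(X)\times H^0(J(X),\Omega_{J(X)})$ I would choose linear coordinates $z_1,\dots,z_g$ on $V$, giving a basis $dz_1,\dots,dz_g$ of $W:=H^0(J(X),\Omega_{J(X)})=H^0(X,K_X)$, together with the dual linear fibre coordinates $w_1,\dots,w_g$. In these coordinates the tautological $1$-form and the canonical symplectic form are
\[ \lambda=\sum_i w_i\,dz_i,\qquad \theta=d\lambda=\sum_i dw_i\wedge dz_i. \]
First I would check the easy inclusion: the fibre translations $T_c\colon (y,w)\mapsto (y,w+c)$ for $c\in W$, and the cotangent lifts of automorphisms of $J(X)$, all preserve $\lambda$ and hence $\theta$; this already exhibits both factors $H^0(J(X),\Omega_{J(X)})$ and $\operatorname{Aut}(J(X))$ inside $\operatorname{Aut}_\theta(\mathcal{M}_H)$.

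The substance is the reverse inclusion. Let $f\in\operatorname{Aut}_\theta(\mathcal{M}_H)$; by Lemma~\ref{lH} it splits as $f=f_1\times f_2$ with $f_1\in\operatorname{Aut}(J(X))$ and $f_2\in\operatorname{Aut}(H^0(X,K_X))$. Writing $f_1=t_a\circ A$ as a translation composed with a group automorphism $A$ (a linear automorphism of $V$ preserving $\Lambda$), the translation acts trivially on invariant forms, so $f_1^* dz_i=\sum_j A_{ij}\,dz_j$; writing $f_2$ in polynomial components $P_1,\dots,P_g$ gives $f_2^* dw_i=\sum_k(\partial P_i/\partial w_k)\,dw_k$. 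Because $f$ is a product map these are the only contributions, so
\[ f^*\theta=\sum_{j,k}\Big(\sum_i A_{ij}\,\frac{\partial P_i}{\partial w_k}\Big)\,dw_k\wedge dz_j. \]
Imposing $f^*\theta=\theta$ and using the pointwise independence of the $2$-forms $dw_k\wedge dz_j$ yields the identity of matrices of functions $A^{T}\,\mathrm{Jac}(f_2)=I$, where $\mathrm{Jac}(f_2)=(\partial P_i/\partial w_k)$. I expect this to be the main obstacle: a priori $f_2$ is only some polynomial automorphism, and it is precisely here that the symplectic constraint must be made to do all the work. Since $A$ is a constant invertible matrix, the identity forces $\mathrm{Jac}(f_2)=(A^{T})^{-1}$ to be constant, whence $f_2$ is affine, $f_2(w)=(A^{-1})^* w+c$, with linear part the dual $(A^{-1})^*=(A^{T})^{-1}$ of $A^{-1}$ on $W=V^\vee$ and with $c\in W$ arbitrary.

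Finally I would assemble the group structure. The formula $f_2(w)=(A^{-1})^* w+c$ shows that every $\theta$-preserving $f$ is the composite of the fibre translation $T_c$ with the cotangent lift $L(f_1)$ of $f_1$; hence $\operatorname{Aut}_\theta(\mathcal{M}_H)$ is generated by the normal subgroup of fibre translations, canonically $W=H^0(J(X),\Omega_{J(X)})$, and the image of $\operatorname{Aut}(J(X))$ under cotangent lifting, and these two subgroups meet only in the identity. A direct computation of the conjugation $L(f_1)\,T_c\,L(f_1)^{-1}=T_{(f_1^{-1})^* c}$ then identifies the action of $\operatorname{Aut}(J(X))$ on $W$ as $f\cdot\alpha=(f^{-1})^*\alpha$, giving the asserted semi-direct product $\operatorname{Aut}_\theta(\mathcal{M}_H)=H^0(J(X),\Omega_{J(X)})\rtimes\operatorname{Aut}(J(X))$ with exactly the stated action.
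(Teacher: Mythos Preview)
Your proof is correct and follows essentially the same route as the paper: both invoke Lemma~\ref{lH} to split $f=f_1\times f_2$, use that $f_1\in\operatorname{Aut}(J(X))$ has constant derivative $A$, and then read off from $f^*\theta=\theta$ that $\mathrm{Jac}(f_2)=(A^{T})^{-1}$ is constant, forcing $f_2$ to be affine with the prescribed linear part. The only difference is cosmetic---you work in explicit coordinates and decompose $f_1$ as translation composed with a group automorphism, whereas the paper states the constancy of $(f_1)_*$ directly---but the argument and its key steps are the same.
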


\section[Structure of the moduli space of ${\mathbb C}^*$-connections]{Structure of the moduli space of $\boldsymbol{{\mathbb C}^*}$-connections}\label{sec2}

Let $X$ be a compact connected Riemann surface of genus $g  \geq  1$, and $K_X$
its holomorphic cotangent bundle.
The Jacobian of~$X$, which parametrizes all the isomorphism classes of
holomorphic line bundles on $X$ of degree zero, is denoted by~$J(X)$.
Let ${\mathcal M}_C$ be the moduli space of holomorphic connections on $X$ of
rank one. Therefore, ${\mathcal M}_C$ parametrizes the isomorphism classes of pairs
of the form $(L ,D)$, where $L$ is a holomorphic line bundle on $X$ and $D$ is
a holomorphic connection on $L$. Since there are no nonzero $(2 ,0)$-forms on
$X$, any holomorphic connection on~$X$ is automatically integrable.

The adjoint action of the algebraic group ${\mathbb C}^*$ on its Lie algebra
$\operatorname{Lie}({\mathbb C}^*) =  {\mathbb C}$ is trivial. Consequently, for
any $(L , D) \in  {\mathcal M}_C$, the holomorphic tangent bundle
to ${\mathcal M}_C$ at the point $(L , D)$ is
\begin{gather}\label{htbc}
T_{(L,D)} {\mathcal M}_C =  H^1(X, {\mathbb C}) .
\end{gather}
Therefore, the real tangent bundle $T^{\mathbb R}_{(L,D)} {\mathcal M}_C$ is identif\/ied
with $H^1(X, {\mathbb C})$, and the almost complex structure on
$T^{\mathbb R}_{(L,D)} {\mathcal M}_C =  H^1(X, {\mathbb C})$ is multiplication
by $\sqrt{-1}$.

Since any holomorphic connection on $X$ is f\/lat, the degree
of any holomorphic line bundle admitting a
holomorphic connection is zero. Therefore, we have an algebraic morphism
\begin{gather}\label{e1}
\varphi\colon \  {\mathcal M}_C \longrightarrow  J(X)  ,\qquad
(L  ,D) \longmapsto  L  .
\end{gather}
This map $\varphi$ is surjective because any holomorphic line bundle~$L$ on~$X$
of degree zero admits a~holomorphic connection. More precisely, the space of all
holomorphic connections on~$L$ is an af\/f\/ine space for the vector space
$H^0(X, K_X)$. Therefore, $\varphi$ makes ${\mathcal M}_C$ an
algebraic principal $H^0(X, K_X)$-bundle over $J(X)$.

Let ${\mathbb V}$ denote the trivial holomorphic vector bundle $J(X)\times H^0(X, K_X)$
over $J(X)$ with f\/iber $H^0(X, K_X)$.
The isomorphism classes of algebraic principal $H^0(X, K_X)$-bundles over $J(X)$
are parametrized by $H^1(J(X), {\mathbb V})$. We will calculate the cohomology
class corresponding to~${\mathcal M}_C$. Note that $\varphi$ does not admit any
holomorphic section because $J(X)$ is compact and ${\mathcal M}_C$ is biholomorphic
to $({\mathbb C}^*)^{2g}$ thus ruling out the existence of any nonconstant holomorphic
map from $J(X)$ to ${\mathcal M}_C$. Consequently, the class in
$H^1(J(X), {\mathbb V})$ corresponding to~${\mathcal M}_C$ is nonzero.

We will brief\/ly describe the Dolbeault type construction of cohomological invariants for
principal $H^0(X,  K_X)$-bundles.

Take an algebraic principal $H^0(X, K_X)$-bundle $q\colon E \longrightarrow
J(X)$. Choose a $C^\infty$ section
\begin{gather*}
s\colon \ J(X) \longrightarrow  E
\end{gather*}
for $q$; such a section exists because the f\/ibers of the projection $q$ are contractible. If
$s$ is holomorphic, then the holomorphic principal $H^0(X, K_X)$-bundle $E$ is trivial.
The invariant for~$E$ is a measure of the failure of~$s$ to be holomorphic. To explain this,
let~$J_1$ and~$J_2$ denote the almost complex structures on $J(X)$ and $E$ respectively.
Let $ds\colon  T^{\mathbb R}J(X) \longrightarrow T^{\mathbb R} E$ be the dif\/ferential of
the map $s$. For any $x \in  J(X)$ and $y \in  E_x$, consider the homomorphism
\begin{gather}\label{g1}
T^{\mathbb R}_xJ(X) \longrightarrow  T^{\mathbb R}_y E  , \qquad
v \longmapsto  ds(J_1(v))- J_2(ds(v)) .
\end{gather}
Since
\begin{itemize}\itemsep=0pt
\item $q\circ s =  \operatorname{Id}_{J(X)}$, and

\item the map $q$ is holomorphic,
\end{itemize}
it follows that the tangent vector $ ds(J_1(v))- J_2(ds(v))$ in~\eqref{g1} is vertical
for~$q$. Using the action of the group $H^0(X,  K_X)$ on $E$, the vertical tangent bundle
for~$q$ is the trivial vector bundle with f\/iber $H^0(X, K_X)$.
Consequently, the homomorphism in~\eqref{g1} def\/ines a section
\begin{gather*}
c(E,s) \in  C^\infty\big(J(X),  \Omega^{0,1}_{J(X)}\otimes {\mathbb V}\big)  .
\end{gather*}
This $(0 ,1)$-form $c(E,s)$ is $\overline{\partial}$-closed because $E$ is a~holomorphic principal $H^0(X, K_X)$-bundle. Then, the Dolbeault cohomological class
\begin{gather}\label{invariant}
c(E) \in  H^1(J(X), {\mathbb V})
\end{gather}
def\/ined by it is the invariant for~$E$.

The Lie algebra $\operatorname{Lie}(J(X))$ of $J(X)$ is the abelian algebra
$H^1(X,  {\mathcal O}_X)$. The Serre duality theorem says that
$H^1(X,  {\mathcal O}_X) =  H^0(X,  K_X)^\vee$. Therefore, the vector bundle
$\mathbb V$ is identif\/ied with the holomorphic cotangent bundle
$\Omega_{J(X)}$. Consequently, we have
\begin{gather*}
H^1(J(X),  {\mathbb V}) =  H^1(J(X),  \Omega_{J(X)})  .
\end{gather*}
Hence the isomorphism classes of holomorphic principal $H^0(X, K_X)$-bundles
on $J(X)$ are pa\-ra\-met\-ri\-zed by $H^1(J(X), \Omega_{J(X)})$. We note that every
element of $H^1(J(X), \Omega_{J(X)})$ is the inva\-riant~(\ref{invariant}) for some holomorphic principal
$H^0(X, K_X)$-bundles on $J(X)$.

Let ${\mathcal M}_R :=  \operatorname{Hom}(\pi_1(X, x_0), {\mathbb C}^*) =
\operatorname{Hom}(H_1(X,  {\mathbb Z}),  {\mathbb C}^*)$ be the space of $1$-dimensional
representations. Sending a f\/lat connection to its monodromy representation, we get a
holomorphic isomorphism
\begin{gather*}
f\colon \  {\mathcal M}_C  \stackrel{\sim}{\longrightarrow}  {\mathcal M}_R  .
\end{gather*}
We have $\operatorname{Hom}(H_1(X, {\mathbb Z}), {\rm U}(1)) \hookrightarrow
{\mathcal M}_R$ using the inclusion of ${\rm U}(1)  =  S^1$ in ${\mathbb C}^*$. From
Hodge theory it follows that every $L \in  J(X)$ admits a unique holomorphic
connection such that the monodromy lies in ${\rm U}(1)$, and thus the composition
\begin{gather}\label{e-1}
\operatorname{Hom}(H_1(X,  {\mathbb Z}),  {\rm U}(1)) \stackrel{f^{-1}}{\longrightarrow}
{\mathcal M}_C \stackrel{\varphi}{\longrightarrow}  J(X)
\end{gather}
is a dif\/feomorphism, where $\varphi$ is constructed in~\eqref{e1}. We note that
the above composition $\varphi\circ f^{-1}$ is a dif\/feomorphism because it is bijective
and homomorphism of groups. We shall denote by
\begin{gather}\label{e2}
\xi\colon \  J(X) \longrightarrow  {\mathcal M}_C
\end{gather}
 the $C^\infty$ section of $\varphi$ given by the inverse of the composition
in~\eqref{e-1}.

Given any $L \in  J(X)$, we consider $\nabla =  \nabla^{1,0}+ \nabla^{0,1}$
the unique unitary f\/lat connection on $L$ such that $(0 ,1)$-type component
$\nabla^{0,1}$ is the Dolbeault operator
on~$L$. The real tangent space $T^{\mathbb R}_{\xi(L)} {\mathcal M}_C$ is
$H^1(X,  {\mathbb C})$, and the almost complex structure on $T^{\mathbb R}_{\xi(L)}
{\mathcal M}_C$ coincides with the multiplication by $\sqrt{-1}$ on
$H^1(X,  {\mathbb C})$ (see \eqref{htbc} and the sentence following it).
Therefore, the holomorphic tangent space
to ${\mathcal M}_C$ is identif\/ied with $H^1(X, {\mathbb C})$. The
inclusion of the Lie group ${\rm U}(1) \hookrightarrow {\mathbb C}^*$, identif\/ies
the Lie algebra $\operatorname{Lie}({\rm U}(1))$ with the subspace
\begin{gather*}
\sqrt{-1}\mathbb R \subset \operatorname{Lie}({\mathbb C}^*) =  \mathbb C .
\end{gather*}
Therefore, the subspace
\begin{gather*}
T^{\mathbb R}_{\xi(L)} \operatorname{Hom}(H_1(X,  {\mathbb Z}),  {\rm U}(1))
  \subset  T^{\mathbb R}_{\xi(L)} {\mathcal M}_C =  H^1(X,  {\mathbb C})
\end{gather*}
coincides with $H^1(X,  \sqrt{-1}{\mathbb R})$ equipped with its natural inclusion
\begin{gather*}H^1(X,  \sqrt{-1}{\mathbb R}) \hookrightarrow  H^1(X,  {\mathbb C})  .\end{gather*} The
anti-holomorphic tangent space $T^{0,1}_L
J(X)$ is identif\/ied with $H^0(X,  K_X)$ by sending any $\alpha \in
H^0(X,  K_X)$ to the f\/lat unitary connection
\begin{gather*}
\big(\nabla^{1,0}-\alpha\big) + \big(\nabla^{0,1}+\overline{\alpha}\big)  .
\end{gather*}
From the above, the complex structure on $T^{0,1}_L J(X)$ coincides with multiplication by
$\sqrt{-1}$ on $H^0(X,  K_X)$. If we identify $T^{0,1}_L J(X)$ with
 $T^{\mathbb R}_L J(X)$ by sending any $(0  ,1)$-tangent vector to its real part,
then the isomorphism
\begin{gather*}
T^{\mathbb R}_L J(X) \longrightarrow
T^{\mathbb R}_{\xi(L)} \operatorname{Hom}(H_1(X,  {\mathbb Z}),  {\rm U}(1))
\end{gather*}
given by the dif\/ferential of the composition map in \eqref{e-1} sends any
$\alpha \in  H^0(X,  K_X)$ to the element in
\begin{gather*}
-2\sqrt{-1}\cdot \text{Im}(\alpha) \in  H^1\big(X,  \sqrt{-1}{\mathbb R}\big)  .
\end{gather*}

The cup product
\begin{gather*}
\bigwedge\nolimits^2 H^1\big(X,  \sqrt{-1}{\mathbb R}\big) \longrightarrow
H^2(X,  {\mathbb R}) =  \mathbb R
\end{gather*}
produces a $2$-form $\omega$ on $J(X)$. The form $\omega$ is closed because
the translation action of~$J(X)$ on itself preserves $\omega$, and any translation
invariant form on a torus is closed. In fact, $\omega$ is a K\"ahler form on~$J(X)$. We shall let
\begin{gather}\label{e3}
\widehat{\omega} \in H^1(J(X),  \Omega_{J(X)})
\end{gather}
be the Dolbeault cohomology class represented by $\omega$.

 From the above, the anti-holomorphic tangent space $T^{0,1}_L
J(X)$ is identif\/ied with
\begin{gather*}
T^{\mathbb R}_{\xi(L)} \operatorname{Hom}(H_1(X,  {\mathbb Z}),  {\rm U}(1)),
\end{gather*}
a subspace of $H^1(X,  {\mathbb C})$ which in turn gives the holomorphic tangent space
to ${\mathcal M}_C$. Hence, consider the almost complex structures obtained for~$J(X)$ and~${\mathcal M}_C$, combined with the above description
of the dif\/ferential of~$\xi$, one has that the
class in $H^1(J(X),  {\mathbb V})$ corresponding to the principal
$H^0(X,  K_X)$-bundle ${\mathcal M}_C$ coincides with~$\widehat{\omega}$
in~\eqref{e3}.

Let
\begin{gather}\label{e4}
0 \longrightarrow  \Omega_{J(X)}  \stackrel{\iota}{\longrightarrow}  E
  \stackrel{\sigma}{\longrightarrow}  {\mathcal O}_{J(X)} \longrightarrow  0
\end{gather}
be the extension of ${\mathcal O}_{J(X)}$ by $\Omega_{J(X)}$ associated to the
extension class $\widehat{\omega}$ in~\eqref{e3}. The section of~${\mathcal O}_{J(X)}$
given by the constant function $1$ will be denoted by~$1_{J(X)}$. We note that
for the projection~$\sigma$ in~\eqref{e4}, the inverse image
$\sigma^{-1}(1_{J(X)}(J(X)))  \subset  E$ is a principal
$H^0(X,  K_X)$-bundle on $J(X)$ (recall that the dual vector space
$\operatorname{Lie}(J(X))^\vee$ is identif\/ied with
$H^0(X,  K_X)$). Since the class in $H^1(J(X),  \Omega_{J(X)})$
corresponding to the principal $H^0(X,  K_X)$-bundle ${\mathcal M}_C$
coincides with~$\widehat{\omega}$, we have the following:

\begin{Lemma}\label{lem1}
The variety ${\mathcal M}_C$ is algebraically isomorphic to the inverse image
\begin{gather*}
\sigma^{-1}(1_{J(X)}(J(X))).
\end{gather*}
\end{Lemma}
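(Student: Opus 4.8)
The plan is to unwind the two descriptions of $\mathcal{M}_C$ as a principal bundle and show they coincide as extensions. The key observation, already established in the excerpt, is that $\mathcal{M}_C$ is an algebraic principal $H^0(X,K_X)$-bundle over $J(X)$ via $\varphi$, and that its classifying invariant in $H^1(J(X),\mathbb{V}) = H^1(J(X),\Omega_{J(X)})$ equals the class $\widehat{\omega}$. So both $\mathcal{M}_C$ and $\sigma^{-1}(1_{J(X)}(J(X)))$ are principal $H^0(X,K_X)$-bundles over $J(X)$ with the same classifying class.

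First I would verify that $\sigma^{-1}(1_{J(X)}(J(X)))$ is genuinely a principal $H^0(X,K_X)$-bundle and compute its invariant. From the extension \eqref{e4}, the fiber over any $x \in J(X)$ is $\sigma_x^{-1}(1) \subset E_x$, an affine space modeled on $\ker(\sigma_x) = (\Omega_{J(X)})_x$, which under the identification $\operatorname{Lie}(J(X))^\vee \cong H^0(X,K_X)$ is precisely an $H^0(X,K_X)$-torsor. The $\overline{\partial}$-obstruction to finding a holomorphic section of $\sigma^{-1}(1_{J(X)})$ is exactly the extension class of \eqref{e4}: a holomorphic splitting of $\sigma$ over $1_{J(X)}$ is the same data as a holomorphic section of the affine bundle, and such a section exists iff the extension class vanishes. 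Hence the invariant $c(\sigma^{-1}(1_{J(X)}(J(X))))$ equals $\widehat{\omega}$ by construction.

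Next I would invoke the classification stated in the excerpt: isomorphism classes of holomorphic principal $H^0(X,K_X)$-bundles on $J(X)$ are parametrized by $H^1(J(X),\mathbb{V})$ via the invariant \eqref{invariant}. Since both bundles have invariant $\widehat{\omega}$, they are holomorphically isomorphic. To upgrade the biholomorphism to an algebraic isomorphism, I would note that both $\mathcal{M}_C$ and $\sigma^{-1}(1_{J(X)}(J(X)))$ are algebraic varieties, that $\varphi$ and $\sigma$ are algebraic morphisms, and that an isomorphism of affine bundles over a projective base respecting the $H^0(X,K_X)$-action which is holomorphic is in fact algebraic — this follows from GAGA applied to the projective base $J(X)$, since the transition data defining the isomorphism is a global section of a coherent algebraic sheaf on $J(X)$.

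The main obstacle, and the step requiring the most care, is matching the two \emph{torsor structures} rather than just the underlying varieties: I must ensure the isomorphism is equivariant for the $H^0(X,K_X)$-action on both sides and covers the identity on $J(X)$, not merely an abstract biholomorphism of total spaces. This is where the precise identification of $\ker(\sigma)$ with $\Omega_{J(X)}$ via Serre duality, together with the vertical-tangent-bundle trivialization used to define $c(E)$ in \eqref{g1}, must be carried through consistently. Once the torsor structures are aligned, the equality of invariants forces the bundles to be isomorphic as principal bundles, and the GAGA argument promotes this to the asserted algebraic isomorphism.
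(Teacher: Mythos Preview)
Your proposal is correct and follows exactly the approach the paper takes: the paper's proof is just the single sentence preceding the lemma, namely that both $\mathcal{M}_C$ and $\sigma^{-1}(1_{J(X)}(J(X)))$ are principal $H^0(X,K_X)$-bundles over $J(X)$ with the same classifying invariant $\widehat{\omega}\in H^1(J(X),\Omega_{J(X)})$, hence isomorphic. You have simply filled in the details the paper leaves implicit, including the GAGA step to pass from holomorphic to algebraic isomorphism.
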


Through the above lemma, we can recover the following result, which from a dif\/ferent perspective can be deduced since the universal vector extension of the Jacobian parametrizes line bundles with connections \cite[Chapter~1]{MM}, and the universal vector extension of any abelian variety is anti-af\/f\/ine \cite[Proposition~2.3(i)]{Br}.

\begin{Proposition}\label{prop1}
There are no nonconstant algebraic functions on ${\mathcal M}_C$.
\end{Proposition}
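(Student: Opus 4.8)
The plan is to use Lemma~\ref{lem1} to realize $\mathcal{M}_C$ as the affine bundle $\varphi\colon \mathcal{M}_C \to J(X)$, which by construction is a torsor under the trivial vector bundle ${\mathbb V} \cong \Omega_{J(X)}$ with fibre $V := H^0(X, K_X)$, and whose isomorphism class is the \emph{nonzero} class $\widehat{\omega} \in H^1(J(X), \Omega_{J(X)})$ represented by the K\"ahler form $\omega$. Since $\varphi$ is an affine morphism, every algebraic function on $\mathcal{M}_C$ is a global section of $\varphi_* \mathcal{O}_{\mathcal{M}_C}$ over the proper variety $J(X)$. Filtering this sheaf by fibrewise polynomial degree gives an exhausting filtration $\mathcal{O}_{J(X)} = W_0 \subset W_1 \subset \cdots$ whose graded pieces are
\[
W_n / W_{n-1} \;\cong\; \operatorname{Sym}^n {\mathbb V}^\vee ,
\]
each a trivial bundle with fibre $\operatorname{Sym}^n V^\vee$. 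Because $J(X)$ is Noetherian, $H^0(\mathcal{M}_C, \mathcal{O}) = \bigcup_n H^0(J(X), W_n)$, so it suffices to prove by induction that $H^0(J(X), W_n) = \mathbb{C}$ for every $n$.

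The base case $n=0$ holds since $H^0(J(X), \mathcal{O}_{J(X)}) = \mathbb{C}$ for $J(X)$ compact and connected. For the inductive step, the short exact sequence $0 \to W_{n-1} \to W_n \to \operatorname{Sym}^n {\mathbb V}^\vee \to 0$ yields the exact sequence
\[
0 \longrightarrow H^0(W_{n-1}) \longrightarrow H^0(W_n) \longrightarrow \operatorname{Sym}^n V^\vee \stackrel{\delta_n}{\longrightarrow} H^1(W_{n-1}),
\]
using that $H^0(\operatorname{Sym}^n {\mathbb V}^\vee) = \operatorname{Sym}^n V^\vee$ as $J(X)$ is proper and ${\mathbb V}$ trivial. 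By the inductive hypothesis $H^0(W_{n-1}) = \mathbb{C}$, so the claim reduces to showing that the connecting map $\delta_n$ is injective for all $n \geq 1$.

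To analyze $\delta_n$ I would compose it with the projection $W_{n-1} \to \operatorname{Sym}^{n-1}{\mathbb V}^\vee$ onto the top graded quotient, obtaining a map
\[
\operatorname{Sym}^n V^\vee \longrightarrow H^1\big(J(X), \operatorname{Sym}^{n-1}{\mathbb V}^\vee\big) = \operatorname{Sym}^{n-1}V^\vee \otimes H^1(J(X), \mathcal{O}_{J(X)}).
\]
Since the whole filtration is built from the single extension class $\widehat{\omega}$, the leading symbol of $\delta_n$ is cup product with $\widehat{\omega}$ followed by contraction on the symmetric algebra: concretely the coproduct $P \mapsto \sum_i (\partial_i P) \otimes x_i \in \operatorname{Sym}^{n-1}V^\vee \otimes V^\vee$, post-composed on the second factor with the linear map $V^\vee \to H^1(J(X), \mathcal{O}_{J(X)})$ induced by $\widehat{\omega}$. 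As $\omega$ is K\"ahler, hence nondegenerate, this latter map is an isomorphism; and the coproduct $\operatorname{Sym}^n V^\vee \to \operatorname{Sym}^{n-1}V^\vee \otimes V^\vee$ admits $\tfrac{1}{n}$ times the multiplication map as a left inverse by Euler's identity, so it is injective in characteristic zero for $n \geq 1$. Therefore $\delta_n$ is injective, which completes the induction and forces $H^0(\mathcal{M}_C, \mathcal{O}) = \mathbb{C}$.

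The main obstacle is the identification made in the last paragraph: unwinding the Dolbeault description of $\widehat{\omega}$ given in Section~\ref{sec2} to verify that the leading symbol of the connecting map $\delta_n$ is genuinely the contraction against $\widehat{\omega}$ composed with the symmetric coproduct. Once this is established, the nondegeneracy of the K\"ahler class does all the work, and the injectivity of $\delta_n$ for every $n \geq 1$ is purely formal linear algebra.
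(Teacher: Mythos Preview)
Your proposal is correct and follows essentially the same approach as the paper: your filtration $W_n$ coincides with the paper's $\operatorname{Sym}^n(E^\vee)$, your connecting map $\delta_n$ is the paper's $\beta$, and the composition with the projection to the top graded piece is exactly the paper's $\gamma\circ\beta$, whose injectivity both arguments deduce from nondegeneracy of the K\"ahler form~$\omega$. The only cosmetic difference is that the paper first realizes $\mathcal{M}_C$ as a hyperplane complement in the projective bundle $P(E)$ in order to identify $H^0(\mathcal{M}_C,\mathcal{O})$ with $\varinjlim_i H^0(J(X),\operatorname{Sym}^i(E^\vee))$, whereas you obtain the same filtration directly from the pushforward $\varphi_*\mathcal{O}_{\mathcal{M}_C}$.
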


\begin{proof}
In view of Lemma~\ref{lem1} it suf\/f\/ices to show that the variety
$\sigma^{-1}(1_{J(X)}(J(X)))$ does not admit any nonconstant algebraic function. We will
f\/irst express $\sigma^{-1}(1_{J(X)}(J(X)))$ as a hyper-plane complement $\mathcal{Y}$ in a
projective bundle over $J(X)$ in  Step~1. Then in Step~2 we shall study
associated bundles, which in turn allow us to study $H^0({\mathcal Y},  {\mathcal
O}_{\mathcal Y})$ in  Step~3. From the description of the cohomology group that we
obtain, we see that $\mathcal{Y}$ does not admit any nonconstant algebraic function if and
only if certain natural inclusion is surjective. Hence, in Step~4 we study this
inclusion, by taking the dual exact sequence to~\eqref{e4}. Surjectivity of the inclusion
can be then seen equivalent to injectivity of an associated map~$\beta$. We conclude the
proof of the proposition by showing in Step~5 that this map is indeed injective.

{\it Step 1.} Let $P(E) \longrightarrow  J(X)$ and $P(\Omega_{J(X)}) \longrightarrow  J(X)$
be the projective bundles parametrizing the lines in the f\/ibers of~$E$
(constructed in~\eqref{e4}) and $\Omega_{J(X)}$
respectively. The homomorphism~$\iota$ in~\eqref{e4} produces an embedding
\begin{gather*}
\widehat{\iota} \colon \   P(\Omega_{J(X)}) \longrightarrow P(E)  .
\end{gather*}
The divisor $\widehat{\iota}(P(\Omega_{J(X)}))  \subset  P(E)$ will be denoted
by $\mathcal D$. We have
\begin{gather}\label{e5}
{\mathcal Y}  :=  P(E){\setminus} {\mathcal D} \cong \sigma^{-1}(1_{J(X)}(J(X)))
\end{gather}
by sending any $v \in  \sigma^{-1}(1_{J(X)}(z))$ and $z \in  J(X)$, to the line
in the f\/iber $E_z$ generated by~$v$.

{\it Step 2.} Consider now the natural projection
\begin{gather*}
p \colon \  P(E) \longrightarrow J(X).
\end{gather*}
For ${\mathcal L} \longrightarrow  P(E)$ the dual of the
tautological line bundle, the f\/iber of~${\mathcal L}$ over any $y \in  P(E)$ is
the dual of the line in~$E_{p(y)}$ represented by~$y$.

Note that for any point $z \in  J(X)$, the two line bundles ${\mathcal L}\vert_{p^{-1}(z)}$
and ${\mathcal O}_{P(E)}({\mathcal D})\vert_{p^{-1}(z)}$ on $p^{-1}(z)$ are isomorphic.
Therefore, from the seesaw theorem (see \cite[p.~51, Corollary~6]{Mu}) it follows that
there is a holomorphic line bundle~$L_0$ on~$J(X)$ such that
\begin{gather}\label{g2}
{\mathcal O}_{P(E)}({\mathcal D}) =  {\mathcal L}\otimes p^*L_0  .
\end{gather}
By the adjunction formula \cite[p.~146]{GH}, the restriction of ${\mathcal O}_{P(E)}(
{\mathcal D})$ to $\mathcal D$ is the normal bund\-le~$N_{\mathcal D}$ to the divisor
${\mathcal D} \subset  P(E)$. This normal bundle~$N_{\mathcal D}$ is identif\/ied with
\begin{gather*}
\operatorname{Hom}\big(({\mathcal L}\vert_{\mathcal D})^\vee,  p^*_1(E/\Omega_{J(X)})\big) =
p^*_1(E/\Omega_{J(X)})\otimes ({\mathcal L}\vert_{\mathcal D}),
\end{gather*}
where
\begin{gather*}
p_1 =  p\vert_{\mathcal D}   \colon \  {\mathcal D} \longrightarrow  J(X)
\end{gather*}
is the restriction of~$p$.
Now, since the quotient $E/\Omega_{J(X)}$ is the trivial line
bundle (see~\eqref{e4}), it follows that $N_{\mathcal D}$ is isomorphic to ${\mathcal
L}\vert_{\mathcal D}$. Consequently from~\eqref{g2} it follows that the line bund\-le~$L_0$ is trivial. This in turn implies that
\begin{gather}\label{e6}
{\mathcal O}_{P(E)}({\mathcal D}) =  {\mathcal L}  .
\end{gather}

{\it Step 3}. To calculate $H^0({\mathcal Y},  {\mathcal O}_{\mathcal Y})$, note that
\begin{gather}\label{e7}
H^0({\mathcal Y},  {\mathcal O}_{\mathcal Y}) =  \varinjlim_{i\geq 0}
H^0(P(E),  {\mathcal O}_{P(E)}(i{\mathcal D})) =
\varinjlim_{i\geq 0} H^0\big(P(E),  {\mathcal L}^i\big)
\end{gather}
(see \eqref{e5} and \eqref{e6}). Since $\mathcal D$ is an ef\/fective divisor, from~\eqref{e5} and~\eqref{e7} we conclude that $\sigma^{-1}(1_{J(X)}(J(X)))$ does not
admit any nonconstant algebraic function if and only if the natural inclusion
\begin{gather}
H^0\big(P(E),  {\mathcal L}^i\big) =
H^0(P(E),  {\mathcal O}_{P(E)}(i{\mathcal D}))\nonumber
\\
\label{e8}
\hphantom{H^0\big(P(E),  {\mathcal L}^i\big)}{} \hookrightarrow
H^0(P(E),  {\mathcal O}_{P(E)}((i+1){\mathcal D})) =
H^0\big(P(E),  {\mathcal L}^{i+1}\big)
\end{gather}
is surjective for all $i \geq  0$. Note that
\begin{gather*}
H^0(P(E),  {\mathcal O}_{P(E)}(i{\mathcal D})) =  H^0\big(J(X),  \operatorname{Sym}^i\big(E^\vee\big)\big)  .
\end{gather*}

{\it Step 4.} To prove that the homomorphism in \eqref{e8} is indeed surjective, consider the dual
of the exact sequence in \eqref{e4}:
\begin{gather*}
0 \longrightarrow  {\mathcal O}_{J(X)}  \stackrel{\sigma^\vee}{\longrightarrow}  E^\vee
 \stackrel{\iota^\vee}{\longrightarrow}  TJ(X)  \longrightarrow  0  .
\end{gather*}
Taking its $(i+1)$-th symmetric power, we have
\begin{gather}\label{e9}
0 \longrightarrow  \operatorname{Sym}^i\big(E^\vee\big)  \stackrel{\sigma'}{\longrightarrow}
\operatorname{Sym}^{i+1}\big(E^\vee\big)  \stackrel{\operatorname{Sym}^{i+1}(\iota^\vee)}{\longrightarrow}
 \operatorname{Sym}^{i+1}(TJ(X)) \longrightarrow  0  ,
\end{gather}
where $\operatorname{Sym}^{i+1}(\iota^\vee)$ is the homomorphism of symmetric products induced
by the homomor\-phism~$\iota^\vee$; the above homomorphism~$\sigma'$ is the symmetrization of
the homomorphism
\begin{gather*}
\otimes^{i}E^\vee =  {\mathcal O}_{J(X)}\otimes \big(\otimes^{i}E^\vee\big)
 \stackrel{\sigma^\vee\otimes{\rm Id}}{\longrightarrow}  \otimes^{i+1}E^\vee  .
\end{gather*}
Let
\begin{gather*}
\beta \colon \  H^0\big(J(X),   \operatorname{Sym}^{i+1}(TJ(X))\big) \longrightarrow  H^1\big(J(X),
\operatorname{Sym}^i\big(E^\vee\big)\big)
\end{gather*}
be the connecting homomorphism in the long exact sequence of cohomologies
associated to the short exact sequence in~\eqref{e9}. Consider the homomorphisms
\begin{gather}
H^0\big(J(X),   \operatorname{Sym}^{i+1}(TJ(X))\big)  \stackrel{\beta}{\longrightarrow}  H^1\big(J(X),
\operatorname{Sym}^i\big(E^\vee\big)\big)\nonumber\\
\hphantom{H^0\big(J(X),   \operatorname{Sym}^{i+1}(TJ(X))\big) }{}
 \stackrel{\gamma}{\longrightarrow} H^1\big(J(X), \operatorname{Sym}^{i}(TJ(X))\big)  ,\label{e10}
\end{gather}
where $\gamma$ is induced by the homomorphism $\operatorname{Sym}^{i}(\iota^\vee)$ (see~\eqref{e9}).

From the long exact sequence of cohomologies for~\eqref{e9} it follows immediately that
the homomorphism in~\eqref{e8} is surjective if $\beta$ in~\eqref{e10} is injective. To
prove that~$\beta$ is injective, it is enough to show that the composition
$\gamma\circ\beta$ in~\eqref{e10} is injective.

{\it Step 5}. Since the extension class for~\eqref{e4} is the cohomology class
$\widehat{\omega}$, the extension class for~\eqref{e9} is $-(i+1)\widehat{\omega}$.
Consequently, the homomorphism $\gamma\circ\beta$ sends any
\begin{gather*}
\eta \in
H^0\big(J(X),   \operatorname{Sym}^{i+1}(TJ(X))\big)
\end{gather*} to the Dolbeault cohomology class of the
contraction
\begin{gather*}
\omega\otimes'\eta \in  C^\infty\big(J(X),  \Omega^{0,1}_{J(X)}\otimes
\operatorname{Sym}^{i+1}(TJ(X))\big)
\end{gather*}
of $\omega\otimes\eta$ of $\Omega^{1,0}_{J(X)}$ and $T(X)$; note that the tensor product
$\omega\otimes\eta$ is a~section of $\Omega^{0,1}_{J(X)}\otimes \Omega^{1,0}_{J(X)}
\otimes \operatorname{Sym}^{i+1}(TJ(X))$ and hence its contraction $\omega\otimes'\eta$ is a
section of $\Omega^{0,1}_{J(X)}\otimes \operatorname{Sym}^{i}(TJ(X)))$. Since both $\omega$ and
$\eta$ are invariant under translations of $J(X)$, it follows that $\omega\otimes'\eta$ is
also invariant under translations of $J(X)$, and hence
represents a nonzero cohomology class. The section $\omega\otimes'\eta$ is nonzero because
$\omega$ is pointwise nondegenerate (recall that it is a K\"ahler form). Therefore, we
conclude that the homomorphism $\gamma\circ\beta$ is injective. Hence the homomorphism in
\eqref{e8} is surjective, and the proof is complete.
\end{proof}

\section[Automorphisms of the moduli of ${\mathbb C}^*$-connections]{Automorphisms of the moduli of $\boldsymbol{{\mathbb C}^*}$-connections}\label{sec3}

The group of algebraic automorphisms of the variety ${\mathcal M}_C$ will be denoted by
$\operatorname{Aut}({\mathcal M}_C)$. The moduli space ${\mathcal M}_C$ is an algebraic group, with
group operation
\begin{gather*}
(L_1  ,D_1)\cdot (L_2  ,D_2) =  \big(L_1\otimes L_2  ,D_1\otimes \operatorname{Id}_{L_2}+
\operatorname{Id}_{L_1}\otimes D_2\big)  .
\end{gather*}
The algebraic map $\varphi$ in \eqref{e1} is a homomorphism of algebraic groups.

The translation action of ${\mathcal M}_C$ on itself
produces an injective homomorphism
\begin{gather*}
\rho  \colon \   {\mathcal M}_C \longrightarrow  \operatorname{Aut}({\mathcal M}_C)  .
\end{gather*}

\begin{Theorem}\label{thm1}
The quotient $\operatorname{Aut}({\mathcal M}_C)/(\rho({\mathcal M}_C))$ is a countable group.
In particular, the image of $\rho$ is the connected component of
$\operatorname{Aut}({\mathcal M}_C)$ containing the identity element.
\end{Theorem}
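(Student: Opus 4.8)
The plan is to transfer the whole question to the Jacobian by exploiting that $J(X)$ is the Albanese variety of $\mathcal{M}_C$, so that every automorphism of the \emph{variety} $\mathcal{M}_C$ automatically descends to one of $J(X)$. Concretely, $\mathcal{M}_C$ is a connected commutative algebraic group sitting in the extension $0\longrightarrow V\longrightarrow\mathcal{M}_C\stackrel{\varphi}{\longrightarrow}J(X)\longrightarrow 0$ with $V:=H^0(X,K_X)$ a vector group and $J(X)$ abelian; hence $\varphi$ realizes $J(X)$ as the maximal abelian-variety quotient of $\mathcal{M}_C$, and any morphism from $\mathcal{M}_C$ to an abelian variety factors through $\varphi$ up to a translation. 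Applying this to $\varphi\circ\Psi$ for $\Psi\in\operatorname{Aut}(\mathcal{M}_C)$ yields a unique $\psi\in\operatorname{Aut}(J(X))$ with $\varphi\circ\Psi=\psi\circ\varphi$, and $\Psi\mapsto\psi$ defines a homomorphism $\Phi\colon\operatorname{Aut}(\mathcal{M}_C)\to\operatorname{Aut}(J(X))$. Since $\varphi$ is a homomorphism of groups, translations go to translations, $\Phi(\rho(m))=\rho_0(\varphi(m))$, so $\Phi$ carries $\rho(\mathcal{M}_C)$ onto $\rho_0(J(X))$ (using surjectivity of $\varphi$).

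Next I would identify the kernel of $\Phi$, and this is where Proposition~\ref{prop1} does the real work. If $\Phi(\Psi)=\operatorname{id}$ then $\varphi\circ\Psi=\varphi$, so for every $m$ the points $\Psi(m)$ and $m$ lie in the same fibre of $\varphi$, which is a torsor under $V$. Their difference therefore defines a morphism $\delta\colon\mathcal{M}_C\to V$, $m\mapsto\Psi(m)\cdot m^{-1}$, into the affine space $V\cong\mathbb{C}^g$. By Proposition~\ref{prop1} every regular function on $\mathcal{M}_C$ is constant, so each component of $\delta$ is constant, say $\delta\equiv v_0$; thus $\Psi$ is the translation by $v_0$, and $\ker\Phi=\rho(V)\subseteq\rho(\mathcal{M}_C)$. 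Combined with the previous paragraph this gives $\Phi^{-1}(\rho_0(J(X)))=\rho(\mathcal{M}_C)$: given $\Psi$ with $\Phi(\Psi)$ the translation by $a\in J(X)$, choose $\widetilde a\in\varphi^{-1}(a)$ and observe $\rho(\widetilde a)^{-1}\Psi\in\ker\Phi$. In particular $\rho(\mathcal{M}_C)$ is the preimage of the normal subgroup $\rho_0(J(X))$, hence normal, and $\Phi$ induces an \emph{injective} homomorphism $\bar\Phi\colon\operatorname{Aut}(\mathcal{M}_C)/\rho(\mathcal{M}_C)\hookrightarrow\operatorname{Aut}(J(X))/\rho_0(J(X))$.

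It then remains to see that the target is countable. Since $J(X)$ is an abelian variety, every automorphism of the variety $J(X)$ fixing the identity is a group automorphism (rigidity), so $\operatorname{Aut}(J(X))=\rho_0(J(X))\rtimes\operatorname{Aut}_{\mathrm{grp}}(J(X))$ and $\operatorname{Aut}(J(X))/\rho_0(J(X))\cong\operatorname{Aut}_{\mathrm{grp}}(J(X))$. The latter is the group of units of the endomorphism ring $\operatorname{End}(J(X))$, which is a finitely generated $\mathbb{Z}$-algebra and hence countable. Through $\bar\Phi$ we conclude that $\operatorname{Aut}(\mathcal{M}_C)/\rho(\mathcal{M}_C)$ is countable, which is the first assertion. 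For the second, $\rho(\mathcal{M}_C)$ is connected, being the image of the connected group $\mathcal{M}_C$, so it lies in the identity component $\operatorname{Aut}(\mathcal{M}_C)^{\circ}$; as $\operatorname{Aut}(\mathcal{M}_C)^{\circ}/\rho(\mathcal{M}_C)$ is at once connected and a subgroup of the countable (hence discrete) quotient, it is trivial, giving $\operatorname{Aut}(\mathcal{M}_C)^{\circ}=\rho(\mathcal{M}_C)$. Alternatively this last point follows directly from Proposition~\ref{prop1}: trivializing the tangent bundle of the group $\mathcal{M}_C$ by left translation gives $H^0(\mathcal{M}_C,T\mathcal{M}_C)=H^0(\mathcal{M}_C,\mathcal{O})\otimes\operatorname{Lie}(\mathcal{M}_C)=\operatorname{Lie}(\mathcal{M}_C)$, so the only global vector fields are the invariant ones, again forcing $\operatorname{Aut}(\mathcal{M}_C)^{\circ}=\rho(\mathcal{M}_C)$.

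The step I expect to be the crux is the descent in the first paragraph, namely verifying rigorously that the fibration $\varphi$ is intrinsic to the \emph{variety} $\mathcal{M}_C$ (equivalently, that $J(X)$ together with $\varphi$ is its Albanese), so that an arbitrary algebraic automorphism, not assumed to respect the group law, must permute the fibres of $\varphi$. Once this is in place the remainder is formal: the kernel computation relies only on anti-affineness (Proposition~\ref{prop1}), and the countability is the classical finiteness of $\operatorname{End}(J(X))$.
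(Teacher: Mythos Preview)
Your proof is correct and follows the same strategy as the paper: descend every automorphism of $\mathcal{M}_C$ to one of $J(X)$, identify the kernel of this descent map with $\rho(\ker\varphi)$ using Proposition~\ref{prop1}, and then invoke the countability of $\operatorname{Aut}(J(X))/\rho_0(J(X))$. For the descent step you flag as the crux, the paper bypasses Albanese language and argues directly that the fibres of $\varphi$ are copies of $\mathbb{C}^g$ and admit no nonconstant algebraic map to $J(X)$ (any map $\mathbb{C}\to J(X)$ would extend to $\mathbb{CP}^1$, which has no nonzero holomorphic $1$-forms), so automorphisms send fibres to fibres; for countability the paper uses $H^0(J(X),TJ(X))=\operatorname{Lie}(J(X))$ rather than finite generation of $\operatorname{End}(J(X))$, but these are interchangeable.
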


\begin{proof}
We will show that any automorphism of ${\mathcal M}_C$ descends to $J(X)$. For that,
f\/irst note that there is no nonconstant algebraic map from $\mathbb C$ to an abelian variety. Indeed, such a map would extend to a nonconstant algebraic map from ${\mathbb C}{\mathbb P}^1$,
and therefore some holomorphic $1$-form on the abelian variety would pull back to a
nonzero holomorphic $1$-form on ${\mathbb C}{\mathbb P}^1$, but ${\mathbb C}{\mathbb P}^1$
does not have any nonzero holomorphic $1$-form. Since there is no nonconstant algebraic
map from~$\mathbb C$ to~$J(X)$, there is no nonconstant algebraic map from a f\/iber of
$\varphi$ (see~\eqref{e1}) to the variety~$J(X)$, because the f\/ibers of
$\varphi$ are isomorphic to~${\mathbb C}^g$. This immediately implies
that any automorphism of~${\mathcal M}_C$ descends to an automorphism of~$J(X)$.

The group of all algebraic automorphisms of $J(X)$ will be denoted by
$\operatorname{Aut}(J(X))$. The above observation produces a homomorphism
\begin{gather}\label{delta}
\delta  \colon \ \operatorname{Aut}({\mathcal M}_C)  \longrightarrow \operatorname{Aut}(J(X))  .
\end{gather}
Recall that $\varphi$ in \eqref{e1} is a homomorphism of algebraic groups. Clearly,
we have \begin{gather*}
\rho(\text{kernel}(\varphi))  \subset  \text{kernel}(\delta)  .
\end{gather*}
We shall denote by \begin{gather*}
\rho_0  \colon \   J(X) \longrightarrow  \operatorname{Aut}(J(X))
\end{gather*}
the homomorphism given by the translation action of $J(X)$ on itself.

To prove the theorem, by the snake lemma it suf\/f\/ices to show the following two statements:
\begin{enumerate}\itemsep=0pt
\item[1)] the quotient $\operatorname{Aut}(J(X))/(\rho_0(J(X)))$ is a countable group,

\item[2)] the inclusion $\rho(\text{kernel}(\varphi))  \hookrightarrow
\text{kernel}(\delta)$ is surjective.
\end{enumerate}

The f\/irst statement follows from the fact that
$H^0(J(X),  TJ(X)) =  \operatorname{Lie}(J(X))$. In what follows we will prove the
second statement.

Take any $\psi \in  \text{kernel}(\delta)  \subset  \operatorname{Aut}({\mathcal M}_C)$, and
for $\zeta \in  H^0(X,  K_X)^\vee$, def\/ine the function
\begin{gather*}
F_{\psi,\zeta}  \colon \   {\mathcal M}_C \longrightarrow  {\mathbb C}  ,
\qquad z  \longmapsto  \zeta(\psi(z)-z)  .
\end{gather*}
Note that $\varphi(\psi(z)) =  \varphi(z)$ because $\psi \in
\text{kernel}(\delta)$, and hence
$\psi(z)-z \in  H^0(X,  K_X)$. From Proposition~\ref{prop1} we know
that $F_{\psi,\zeta}$ is a constant function. This implies that there is
an element $v \in  H^0(X,  K_X)$ such that
$\psi(z) =  z+ v$ for all $z \in  {\mathcal M}_C$. So we have $\psi \in
\rho(\text{kernel}(\varphi))$, which completes the proof.
\end{proof}

\subsection{Automorphisms preserving cohomology class}\label{sec3-1}

As mentioned previously, the moduli space ${\mathcal M}_C$ is equipped with an algebraic
symplectic form (see \cite{atiyah,Go}). The cohomology class in $H^2({\mathcal M}_C,
{\mathbb C})$ def\/ined by the symplectic form will be denoted by~$\theta$. The pullback
of the symplectic form on ${\mathcal M}_C$ by the section~$\xi$ in~\eqref{e2} coincides
with the K\"ahler form on~$J(X)$. Therefore, the cohomology class
$\theta$ on ${\mathcal M}_C$ coincides with the pull\-back~$\varphi^*\widehat{\omega}$
of the K\"ahler class on~$J(X)$ (see~\eqref{e1} and~\eqref{e3}). Let
$\operatorname{Aut}_\theta({\mathcal M}_C)$ denote the group of all $\tau \in
\operatorname{Aut}({\mathcal M}_C)$ such that $\tau^*\theta =  \theta$. Our aim in this
subsection is to compute $\operatorname{Aut}_\theta({\mathcal M}_C)$.

The group of all holomorphic automorphisms of $X$ will be denoted by $\operatorname{Aut}(X)$.
Let
\begin{gather*}
\operatorname{Aut}^0(X)  \subset  \operatorname{Aut}(X)
\end{gather*} be the connected component containing
the identity element. If $g \geq  2$, then we have \mbox{$\operatorname{Aut}^0(X) {=} e$}. Let
\begin{gather*}
\rho_C\colon \ \operatorname{Aut}(X) \longrightarrow  \operatorname{Aut}({\mathcal M}_C)
\end{gather*}
be the homomorphism that sends any $h  \in  \operatorname{Aut}(X)$ to the automorphism
of ${\mathcal M}_C$ def\/ined by
$(L ,D)  \longmapsto  (h^*L  ,h^*D)$. If $g  \geq  2$, then $\rho_C$ is injective.
Indeed, the homomorphism $\operatorname{Aut}(X) \longrightarrow  \operatorname{Aut}(J(X))$ that sends
any $h \in  \operatorname{Aut}(X)$ to the automorphism $L  \longmapsto  h^*L$ is injective
if $g \geq  2$. If $g = 1$ then $X$ is an elliptic curve and $\operatorname{Aut}^0(X) = X$, acting on itself by translations. If $\tau \colon X \to X$ is any such translation then for any line bundle with holomorphic connection~$(L,D)$, we have $(\tau^*L , \tau^*D)
 \cong  (L  ,D)$ since the corresponding f\/lat connections have the same monodromy. Therefore the homomorphism $\rho_C\vert_{\operatorname{Aut}^0(X)}$
is trivial, and $\rho_C$ produces an embedding of $\operatorname{Aut}(X)/\operatorname{Aut}^0(X)$
in $\operatorname{Aut}(J(X))$.

Let $G$ denote the subgroup of $\operatorname{Aut}({\mathcal M}_C)$ generated by
$\rho_C(\operatorname{Aut}(X))$ together with the inversion $(L  ,D)  \longmapsto
(L^\vee  ,D^\vee)$ of the group ${\mathcal M}_C$. Using the actions of $G$ and
$\rho_C(\operatorname{Aut}(X))$ on ${\mathcal M}_C$, we construct the semi-direct products
\begin{gather*}
{\mathcal G}_0  :=  {\mathcal M}_C\rtimes\rho_C(\operatorname{Aut}(X))
\qquad \text{and}\qquad {\mathcal G}  :=  {\mathcal M}_C\rtimes G  .
\end{gather*}
Note that using the action of $\rho_C(\operatorname{Aut}(X))$
(respectively, $G$) and the translation action of ${\mathcal M}_C$ on itself, the
group ${\mathcal G}_0$ (respectively, ${\mathcal G}$) acts on ${\mathcal M}_C$.

\begin{Theorem}\label{thm2}
The group $\operatorname{Aut}_\theta({\mathcal M}_C)$ is given by
\begin{enumerate}\itemsep=0pt
\item[$1)$] $\operatorname{Aut}_\theta({\mathcal M}_C)
 =  {\mathcal G}$ if $X$ is not hyperelliptic;

\item[$2)$] $\operatorname{Aut}_\theta({\mathcal M}_C)  =  {\mathcal G}_0$ if $X$ is hyperelliptic.
\end{enumerate}
\end{Theorem}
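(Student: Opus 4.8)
The plan is to prove the two inclusions $\operatorname{Aut}_\theta({\mathcal M}_C) \supseteq {\mathcal G}$ (respectively ${\mathcal G}_0$) and $\operatorname{Aut}_\theta({\mathcal M}_C) \subseteq {\mathcal G}$ (respectively ${\mathcal G}_0$), organizing everything around the descent homomorphism $\delta$ from \eqref{delta} and the identity $\theta = \varphi^*\widehat\omega$ recorded above. For the first inclusion I would check that each generator of ${\mathcal G}$ preserves $\theta$. The translations $\rho({\mathcal M}_C)$ act trivially on $H^*({\mathcal M}_C,{\mathbb C})$, so they fix $\theta$. An element $\rho_C(h)$ descends under $\varphi$ to the automorphism $L\longmapsto h^*L$ of $J(X)$ induced by $h \in \operatorname{Aut}(X)$; since a holomorphic automorphism of $X$ preserves the intersection form on $H_1(X,{\mathbb Z})$, it fixes $\widehat\omega$, whence $\rho_C(h)$ fixes $\theta = \varphi^*\widehat\omega$. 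Finally the inversion $(L,D)\longmapsto(L^\vee,D^\vee)$ descends to the map $-1$ on $J(X)$, which acts as the identity on $H^2(J(X),{\mathbb C}) = \bigwedge^2 H^1(J(X),{\mathbb C})$ and so also fixes $\widehat\omega$. This gives ${\mathcal G}\subseteq\operatorname{Aut}_\theta({\mathcal M}_C)$, and likewise ${\mathcal G}_0\subseteq\operatorname{Aut}_\theta({\mathcal M}_C)$.

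For the reverse inclusion, the key reduction is that $\tau^*\theta=\theta$ is equivalent to $\delta(\tau)^*\widehat\omega=\widehat\omega$. Indeed, since $\varphi\circ\tau=\delta(\tau)\circ\varphi$ we have $\tau^*\theta=\varphi^*\delta(\tau)^*\widehat\omega$, while $\theta=\varphi^*\widehat\omega$; and $\varphi^*$ is injective on cohomology because the $C^\infty$ section $\xi$ of \eqref{e2} satisfies $\xi^*\varphi^*=\operatorname{id}$ (as $\varphi\circ\xi=\operatorname{id}_{J(X)}$). Thus $\delta$ carries $\operatorname{Aut}_\theta({\mathcal M}_C)$ into the subgroup $\operatorname{Aut}_{\widehat\omega}(J(X))$ of automorphisms of $J(X)$ preserving the polarization class $\widehat\omega$.

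The heart of the argument, and the step I expect to be the main obstacle, is the identification of $\operatorname{Aut}_{\widehat\omega}(J(X))$ via the Torelli theorem. By rigidity every algebraic automorphism of $J(X)$ is a translation composed with a group automorphism, and translations fix $\widehat\omega$; hence $\operatorname{Aut}_{\widehat\omega}(J(X)) = J(X)\rtimes A$, where $A$ is the group of group-automorphisms fixing $\widehat\omega$. The Torelli theorem identifies $A$ with $\{\pm h^* : h\in\operatorname{Aut}(X)\}$, the map $h\mapsto h^*$ being injective for $g\ge 2$. The hyperelliptic dichotomy enters precisely here: $-1\in A$ always, but $-1$ lies in the image of $\operatorname{Aut}(X)$ if and only if $X$ is hyperelliptic, in which case $-1 = \iota^*$ for the hyperelliptic involution $\iota$, which is central in $\operatorname{Aut}(X)$. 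Hence $A = \delta(G)$ when $X$ is not hyperelliptic and $A = \delta(\rho_C(\operatorname{Aut}(X)))$ when $X$ is hyperelliptic; in the latter case the inversion coincides with $\rho_C(\iota)$, so $G = \rho_C(\operatorname{Aut}(X))$ and ${\mathcal G} = {\mathcal G}_0$. Since $\varphi$ is surjective, $\delta(\rho({\mathcal M}_C)) = \rho_0(J(X))$ is the full translation subgroup, and therefore $\delta({\mathcal G}) = \rho_0(J(X))\cdot A = \operatorname{Aut}_{\widehat\omega}(J(X))$ in the non-hyperelliptic case, and $\delta({\mathcal G}_0) = \operatorname{Aut}_{\widehat\omega}(J(X))$ in the hyperelliptic case.

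It remains to conclude by a diagram chase. From the proof of Theorem~\ref{thm1} we have $\text{kernel}(\delta) = \rho(\text{kernel}(\varphi))$, the group of translations by elements of $H^0(X,K_X)\subset{\mathcal M}_C$; in particular $\text{kernel}(\delta)\subseteq{\mathcal M}_C\subseteq{\mathcal G}$. Thus ${\mathcal G}$ and $\operatorname{Aut}_\theta({\mathcal M}_C)$ both contain $\text{kernel}(\delta)$ and, by the previous paragraph together with the reduction, both have image exactly $\operatorname{Aut}_{\widehat\omega}(J(X))$ under $\delta$. As ${\mathcal G}\subseteq\operatorname{Aut}_\theta({\mathcal M}_C)$, having equal kernels and equal images forces ${\mathcal G}=\operatorname{Aut}_\theta({\mathcal M}_C)$ in the non-hyperelliptic case; the identical argument with ${\mathcal G}_0$ in place of ${\mathcal G}$ yields the hyperelliptic case.
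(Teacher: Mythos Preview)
Your proof is correct and follows essentially the same strategy as the paper's: both reduce to the group $\operatorname{Aut}_{\widehat\omega}(J(X))$ via the descent homomorphism $\delta$, invoke the strong Torelli theorem (the paper cites Weil's result in~\cite{We}) to identify that group, and finish using the equality $\text{kernel}(\delta)=\rho(\text{kernel}(\varphi))$ established in the proof of Theorem~\ref{thm1}. Your write-up is in fact more explicit than the paper's in one respect: you justify that $\delta$ carries $\operatorname{Aut}_\theta({\mathcal M}_C)$ into $\operatorname{Aut}_{\widehat\omega}(J(X))$ by noting that $\varphi^*$ is injective on cohomology via the $C^\infty$ section $\xi$, a step the paper leaves implicit.
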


\begin{proof}
As mentioned before, we have $\theta =  \varphi^*\widehat{\omega}$. From this it
follows that for any element of ${\mathcal G}$, the corresponding automorphism of
${\mathcal M}_C$ preserves $\theta$.

Let $\operatorname{Aut}_{\widehat{\omega}}(J(X))$ be the group of all automorphisms of the variety
$J(X)$ that preserve the cohomology class $\widehat{\omega}$. From \cite[Hauptsatz, p.~35]{We}
one has the following:
\begin{enumerate}\itemsep=0pt
\item Assume that $X$ is not hyperelliptic. Then $\operatorname{Aut}_{\widehat{\omega}}(J(X))$
is generated by translations of $J(X)$, $\operatorname{Aut}(X)$ and the inversion
$L  \longmapsto  L^\vee$ of $J(X)$.

\item Assume that $X$ is hyperelliptic. Then $\operatorname{Aut}_{\widehat{\omega}}(J(X))$
is generated by translations of $J(X)$ and $\operatorname{Aut}(X)$. (The hyperelliptic
involution of $X$ induces the inversion of $J(X)$.)
\end{enumerate}

Consider the homomorphism $\delta$ in \eqref{delta}. In the proof of Theorem~\ref{thm1} it was shown that the inclusion
\begin{gather}\label{in}
\rho(\text{kernel}(\varphi))  \hookrightarrow  \text{kernel}(\delta)
\end{gather}
is surjective. First assume that $X$ is not hyperelliptic. Using $\varphi$ in
\eqref{e1}, we get a homomorphism
\begin{gather*}
G  \longrightarrow  \operatorname{Aut}(J(X))  .
\end{gather*}
{}From the above result of \cite{We} we know that this homomorphism is injective,
its image is a normal subgroup of $\operatorname{Aut}(J(X))$ and the composition
\begin{gather*}
G  \longrightarrow  \operatorname{Aut}(J(X)) \longrightarrow  \operatorname{Aut}(J(X))/J(X)
\end{gather*}
is an isomorphism. Therefore, from the surjectivity of the homomorphism in~\eqref{in}
we conclude that $\operatorname{Aut}_\theta({\mathcal M}_C)
 =  {\mathcal G}$.

If $X$ is hyperelliptic, then $\operatorname{Aut}(J(X))/\operatorname{Aut}(X) =  J(X)$ by the
above theorem of \cite{We}. Therefore, by the above argument it follows that
$\operatorname{Aut}_\theta({\mathcal M}_C)  =  {\mathcal G}_0$.
\end{proof}

From the def\/initions of $\mathcal{G}_0$ and $\mathcal{G}$, it is straightforward to verify that these groups preserve the algebraic symplectic
form on ${\mathcal M}_C$. Therefore, Theorem \ref{thm2} gives the following:

\begin{Corollary}\label{cor1}\quad
\begin{enumerate}\itemsep=0pt
\item[$1.$] Assume that $X$ is not hyperelliptic. Then the group of algebraic automorphisms of
${\mathcal M}_C$ that preserve the symplectic form on ${\mathcal M}_C$ is~${\mathcal G}$.

\item[$2.$] Assume that $X$ is hyperelliptic. Then the group of algebraic automorphisms of
${\mathcal M}_C$ that preserve the symplectic form on~${\mathcal M}_C$ is ${\mathcal G}_0$.
\end{enumerate}
\end{Corollary}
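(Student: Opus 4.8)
The plan is to deduce the Corollary from Theorem~\ref{thm2} by upgrading ``preserves the cohomology class $\theta$'' to ``preserves the symplectic form itself''. Write $\operatorname{Sympl}({\mathcal M}_C)$ for the group of algebraic symplectomorphisms of ${\mathcal M}_C$. One inclusion is immediate: any automorphism preserving the closed $2$-form preserves its de Rham class, so $\operatorname{Sympl}({\mathcal M}_C) \subseteq \operatorname{Aut}_\theta({\mathcal M}_C)$, and by Theorem~\ref{thm2} the latter equals ${\mathcal G}$ (non-hyperelliptic case) or ${\mathcal G}_0$ (hyperelliptic case). It therefore suffices to prove the reverse inclusion, namely that every element of ${\mathcal G}$ (respectively ${\mathcal G}_0$) actually preserves the symplectic form and not merely its class.

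First I would recall, using~\eqref{htbc}, that under the identification $T_{(L,D)}{\mathcal M}_C = H^1(X,{\mathbb C})$ the symplectic form is given at every point by the cup-product pairing on $H^1(X,{\mathbb C})$; in particular the form is translation-invariant. I would then check the reverse inclusion on the generators of ${\mathcal G}$: the translations by elements of ${\mathcal M}_C$, the automorphisms $\rho_C(h)$ for $h \in \operatorname{Aut}(X)$, and the inversion $(L,D) \longmapsto (L^\vee, D^\vee)$. Translation-invariance disposes of the first type directly. For $\rho_C(h)$, the differential acts on each tangent space $H^1(X,{\mathbb C})$ by the pullback $h^*$, which preserves the cup product because a holomorphic automorphism of $X$ is orientation-preserving and so acts as the identity on $H^2(X,{\mathbb Z}) \cong {\mathbb Z}$. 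For the inversion, the differential is multiplication by $-1$ on $H^1(X,{\mathbb C})$, and the cup-product pairing is unchanged when both arguments are negated. Hence all three types of generators are symplectomorphisms, so every element of ${\mathcal G}$ (respectively ${\mathcal G}_0$) is one.

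Combining the two inclusions yields $\operatorname{Sympl}({\mathcal M}_C) = \operatorname{Aut}_\theta({\mathcal M}_C)$, which Theorem~\ref{thm2} identifies with ${\mathcal G}$ or ${\mathcal G}_0$ according to whether $X$ is non-hyperelliptic or hyperelliptic, proving both parts. The only delicate point is to make precise that, under the canonical identification~\eqref{htbc}, the differential of each generator acts on the tangent spaces by the stated linear map on $H^1(X,{\mathbb C})$ uniformly across base points; once this naturality is recorded, the remaining verification is a short computation with the cup product, and I anticipate no serious obstacle.
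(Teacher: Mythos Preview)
Your proposal is correct and follows exactly the same approach as the paper: the paper simply remarks that ``from the definitions of $\mathcal{G}_0$ and $\mathcal{G}$, it is straightforward to verify that these groups preserve the algebraic symplectic form on ${\mathcal M}_C$,'' and then invokes Theorem~\ref{thm2}. You have merely spelled out that straightforward verification on generators, which the paper leaves to the reader.
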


\section{Automorphisms of the representation space}\label{sec4}

The representation space
\begin{gather*}
{\mathcal M}_R =  \operatorname{Hom}(H_1(X,  {\mathbb Z}),  {\mathbb C}^*) =
\operatorname{Hom}(\pi_1(X),  {\mathbb C}^*)
\end{gather*}
is algebraically isomorphic to $({\mathbb C}^*)^{2g}$. A choice of a basis of
the $\mathbb Z$-module $H_1(X,  {\mathbb Z})$ produces an isomorphism of
${\mathcal M}_R$ with $({\mathbb C}^*)^{2g}$. The group structure of the multiplicative
group ${\mathbb C}^*$ makes~${\mathcal M}_R$ a complex algebraic group.

The group $H_1(X,  {\mathbb Z})$
is identif\/ied with $H^1({\mathcal M}_R,  {\mathbb Z})$ by the $(1,1)$-type
K\"unneth component of the f\/irst Chern class of a Poincar\'e line bundle on
$X\times{\mathcal M}_R$. It should be clarif\/ied that this $(1,1)$-type K\"unneth
component is independent of the choice of the Poincar\'e line bundle.
The group of all automorphisms of the $\mathbb Z$-module $H_1(X,  {\mathbb Z})$
will be denoted by~$\Gamma$. So $\Gamma$ is isomorphic to $\text{GL}(2g,{\mathbb Z})$.

Let $\operatorname{Aut}({\mathcal M}_R)$ denote the group of all algebraic automorphisms of
${\mathcal M}_R$. Let
\begin{gather}\label{f}
f  \colon \  \operatorname{Aut}({\mathcal M}_R) \longrightarrow  \Gamma
\end{gather}
be the homomorphism that sends any automorphism of ${\mathcal M}_R$ to the automorphism
of \begin{gather*}H^1({\mathcal M}_R,  {\mathbb Z}) =  H_1(X,  {\mathbb Z})\end{gather*} induced by it.

\begin{Lemma}\label{lem2}
The homomorphism $f$ in \eqref{f} is surjective.
\end{Lemma}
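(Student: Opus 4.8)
The plan is to produce an explicit right inverse of $f$, thereby establishing surjectivity directly. The starting point is the description ${\mathcal M}_R = \operatorname{Hom}(H_1(X, {\mathbb Z}), {\mathbb C}^*)$, which exhibits ${\mathcal M}_R$ as the algebraic torus whose character lattice is $H_1(X, {\mathbb Z})$: every $m \in H_1(X, {\mathbb Z})$ determines a character $\operatorname{ev}_m \colon \chi \mapsto \chi(m)$ of ${\mathcal M}_R$, and this assignment identifies $H_1(X, {\mathbb Z})$ with the character group. Given $A \in \Gamma = \operatorname{Aut}(H_1(X, {\mathbb Z}))$, I would define an automorphism of ${\mathcal M}_R$ by precomposition,
\begin{gather*}
\Phi_A \colon \ {\mathcal M}_R \longrightarrow {\mathcal M}_R  , \qquad \chi \longmapsto \chi \circ A  .
\end{gather*}
This is manifestly a group automorphism, and the candidate splitting $\Gamma \to \operatorname{Aut}({\mathcal M}_R)$ will be built from the assignment $A \mapsto \Phi_A$, after correcting for variance as explained below.

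First I would check that $\Phi_A$ is algebraic. A choice of basis $e_1, \dots, e_{2g}$ of $H_1(X, {\mathbb Z})$ gives coordinates $z_j = \chi(e_j)$ identifying ${\mathcal M}_R$ with $({\mathbb C}^*)^{2g}$; if $A = (a_{ij})$, then $\Phi_A$ sends the coordinate $z_j$ to the Laurent monomial $\prod_i z_i^{a_{ij}}$, which is a regular function on $({\mathbb C}^*)^{2g}$. Since $A \in \operatorname{GL}(2g, {\mathbb Z})$, its inverse is again integral, so $\Phi_A$ has an algebraic inverse and therefore lies in $\operatorname{Aut}({\mathcal M}_R)$. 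A direct computation gives $\Phi_A \circ \Phi_B = \Phi_{BA}$, so $A \mapsto \Phi_A$ is an anti-homomorphism; setting $s(A) := \Phi_{A^{-1}}$ repairs this and yields a genuine group homomorphism $s \colon \Gamma \to \operatorname{Aut}({\mathcal M}_R)$.

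It then remains to compute $f(\Phi_A)$, namely the action of $\Phi_A$ on $H^1({\mathcal M}_R, {\mathbb Z}) = H_1(X, {\mathbb Z})$. Under the identification of the character lattice with $H^1$, pullback gives $\Phi_A^*[\operatorname{ev}_m] = [\operatorname{ev}_m \circ \Phi_A] = [\operatorname{ev}_{A(m)}]$, so $\Phi_A$ acts as $A$ itself. Consequently $A \mapsto f(\Phi_A)$ is a bijection of $\Gamma$ onto itself, which already forces $f$ to be surjective, and with the correct variance one obtains $f \circ s = \operatorname{id}_\Gamma$. The one point that requires care — and the main obstacle — is reconciling the topological identification $H^1({\mathcal M}_R, {\mathbb Z}) = H_1(X, {\mathbb Z})$ furnished by the $(1,1)$-type K\"unneth component of the Poincar\'e bundle with the algebraic character-lattice identification used above. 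These may differ by a transpose or an inverse, but since $\Gamma$ is stable under both operations, the conclusion that $f$ is surjective is unaffected.
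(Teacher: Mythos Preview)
Your proof is correct and follows essentially the same approach as the paper: both construct the explicit monomial automorphisms of the torus $({\mathbb C}^*)^{2g}$ associated to a matrix in $\operatorname{GL}(2g,{\mathbb Z})$ and observe that the induced action on $H^1$ recovers the given element of $\Gamma$. Your treatment is more thorough---you give the coordinate-free description via precomposition, address the (anti-)homomorphism issue, and flag the possible transpose/inverse discrepancy in the identification $H^1({\mathcal M}_R,{\mathbb Z}) = H_1(X,{\mathbb Z})$---but the core construction is identical to the paper's.
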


\begin{proof}
Given any $(a_{ij})^{2g}_{i,j=1} \in  \text{GL}(2g,{\mathbb Z})$, consider the
automorphism $T$ of $({\mathbb C}^*)^{2g}$ def\/ined as follows: the $i$-th coordinate
of $T(z_1,\dots  , z_{2g})$, $(z_1,\dots  , z_{2g}) \in  ({\mathbb C}^*)^{2g}$, is
\begin{gather*}
\prod_{j=1}^{2g} z^{a_{ij}}_j  .
\end{gather*}
The automorphism of $H^1(({\mathbb C}^*)^{2g}, {\mathbb Z}) = {\mathbb Z}^{2g}$ induced
by $T$ is given by the standard action of $(a_{ij})^{2g}_{i,j=1}$ on ${\mathbb Z}^{2g}$.
\end{proof}

The map $(a_{ij})^{2g}_{i,j=1}  \longmapsto  \operatorname{Aut}(({\mathbb C}^*)^{2g})$ in the
proof of Lemma~\ref{lem2} produces a canonical right-splitting of the homomorphism~$f$ in~\eqref{f}. Since $f$ is surjective, this implies that the group $\operatorname{Aut}({\mathcal
M}_R)$ is the semi-direct product
\begin{gather*}
\operatorname{Aut}({\mathcal M}_R) =  \text{kernel}(f)\rtimes\Gamma  .
\end{gather*}

The group of all algebraic automorphisms of $({\mathbb C}^*)^{2g}$ that preserve every
factor is $({\mathbb C}^*)^{2g}$ acting on itself by translations. Therefore,
we have the following:

\begin{Theorem}\label{thm3}
The automorphism group $\operatorname{Aut}({\mathcal M}_R)$ is the semi-direct product
${\mathcal M}_R\rtimes\Gamma$.
\end{Theorem}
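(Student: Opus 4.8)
The plan is to identify $\text{kernel}(f)$ explicitly and then invoke the splitting already in hand. Recall that Lemma~\ref{lem2} shows $f$ is surjective, and that the assignment $(a_{ij}) \longmapsto T$ constructed in its proof furnishes a canonical right-splitting of $f$, so that $\operatorname{Aut}({\mathcal M}_R) = \text{kernel}(f) \rtimes \Gamma$. Thus the theorem reduces to the single assertion that $\text{kernel}(f)$ coincides with the translation action of ${\mathcal M}_R = (\mathbb{C}^*)^{2g}$ on itself. First I would fix the coordinates $z_1, \ldots, z_{2g}$ on ${\mathcal M}_R \cong (\mathbb{C}^*)^{2g}$ coming from a basis of $H_1(X, \mathbb{Z})$, so that the coordinate ring is the Laurent polynomial ring $\mathbb{C}[z_1^{\pm 1}, \ldots, z_{2g}^{\pm 1}]$.

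The key structural input is the description of algebraic automorphisms of $(\mathbb{C}^*)^{2g}$. The units of the Laurent polynomial ring are exactly the monomials $c\, z_1^{a_1}\cdots z_{2g}^{a_{2g}}$ with $c \in \mathbb{C}^*$ and $a_j \in \mathbb{Z}$. Since any $T \in \operatorname{Aut}({\mathcal M}_R)$ must send each invertible coordinate function $z_i$ to a unit, one obtains $z_i \circ T = c_i \prod_{j=1}^{2g} z_j^{a_{ij}}$ for some $c_i \in \mathbb{C}^*$ and some integer matrix $(a_{ij})$, and requiring $T$ to be invertible forces $(a_{ij}) \in \operatorname{GL}(2g, \mathbb{Z})$. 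This is the step I expect to be the main obstacle, since everything hinges on ruling out non-monomial coordinate expressions, which is precisely the computation of the unit group of the Laurent ring.

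It then remains to match $(a_{ij})$ with $f(T)$. Exactly as computed in the proof of Lemma~\ref{lem2}, the automorphism $f(T)$ of $H^1({\mathcal M}_R, \mathbb{Z}) = H_1(X, \mathbb{Z})$ induced by $T$ is the standard action of the exponent matrix $(a_{ij})$. Hence $T \in \text{kernel}(f)$ if and only if $(a_{ij})$ is the identity matrix, that is, $z_i \circ T = c_i z_i$ for every $i$; this says precisely that $T$ is the translation by $(c_1, \ldots, c_{2g}) \in (\mathbb{C}^*)^{2g} = {\mathcal M}_R$. Conversely, every translation visibly induces the identity on $H^1$, so $\text{kernel}(f) = {\mathcal M}_R$ acting by translations. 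Combining this identification with the splitting above yields $\operatorname{Aut}({\mathcal M}_R) = {\mathcal M}_R \rtimes \Gamma$, completing the proof.
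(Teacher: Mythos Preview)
Your proposal is correct and follows the same route as the paper: both use the splitting furnished by Lemma~\ref{lem2} to reduce to identifying $\text{kernel}(f)$, and both identify it with the translation subgroup. The paper states this last step tersely, asserting only that ``the group of all algebraic automorphisms of $({\mathbb C}^*)^{2g}$ that preserve every factor is $({\mathbb C}^*)^{2g}$ acting on itself by translations''; your computation of the unit group of the Laurent ring is exactly the explicit justification underlying that assertion (and simultaneously shows that $\text{kernel}(f)$ consists precisely of the factor-preserving automorphisms).
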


We shall consider now $\Gamma_{\rm Sp}  \subset  \Gamma =
\operatorname{Aut}(H_1(X,  {\mathbb Z}))$ the group of automorphisms that preserve the
cap product on $H_1(X,  {\mathbb Z})$. So $\Gamma_{\rm Sp}$ is isomorphic to the
symplectic group $\text{Sp}(2g,{\mathbb Z})$. Using Theorem \ref{thm3} it can be
deduced that the group of automorphisms of ${\mathcal M}_R$ that preserve its
symplectic form is ${\mathcal M}_R\rtimes\Gamma_{\rm Sp}$.

Although the holomorphic isomorphism between ${\mathcal M}_C$ and
${\mathcal M}_R$ is not algebraic, comparing Theorem~\ref{thm2} and Theorem~\ref{thm3}
we obtain a relation between their automorphism groups. Let $h \colon \operatorname{Aut}(X)/\operatorname{Aut}^0(X) \times \mathbb{Z}_2 \to \operatorname{Aut}(\mathcal{M}_R)$ be the homomorphism which sends $\operatorname{Aut}(X)/\operatorname{Aut}^0(X)$ to its image in $\operatorname{Aut}(\mathcal{M}_R)$ and maps the generator of~$\mathbb{Z}_2$ to the inversion map $\phi   \longmapsto  \phi^{-1}$, sending a homomorphism $\phi \colon \pi_1(X) \to \mathbb{C}^*$ to its inverse~$\phi^{-1}$. Let $G_M \subset \operatorname{Aut}(\mathcal{M}_R)$ be the image of~$h$. Then:

\begin{Corollary}\label{cor2}
For any $\tau \in  \operatorname{Aut}_\theta ({\mathcal M}_C)$, the holomorphic
automorphism of ${\mathcal M}_R$ given by $\tau$ using the holomorphic identification
between ${\mathcal M}_C$ and ${\mathcal M}_R$ is actually algebraic. More precisely, the
set of automorphisms of ${\mathcal M}_R$ given by $\operatorname{Aut}_\theta ({\mathcal M}_C)$
is as follows:
\begin{enumerate}\itemsep=0pt
\item[$1)$] it is ${\mathcal M}_R\rtimes G_M$ if~$X$ is not hyperelliptic,

\item[$2)$] it is ${\mathcal M}_R\rtimes h(\operatorname{Aut}(X)/\operatorname{Aut}^0(X))$ if $X$ is hyperelliptic.
\end{enumerate}
\end{Corollary}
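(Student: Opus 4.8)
The plan is to use the monodromy isomorphism $\mu \colon \mathcal{M}_C \stackrel{\sim}{\longrightarrow} \mathcal{M}_R$ constructed in Section~\ref{sec2}, which is a holomorphic isomorphism of algebraic groups, and to transport each $\tau \in \operatorname{Aut}_\theta(\mathcal{M}_C)$ to the holomorphic automorphism $\mu \circ \tau \circ \mu^{-1}$ of $\mathcal{M}_R$. By Theorem~\ref{thm2}, the group $\operatorname{Aut}_\theta(\mathcal{M}_C)$ is generated by the translations of $\mathcal{M}_C$, the automorphisms $\rho_C(h)$ for $h \in \operatorname{Aut}(X)$, and, in the non-hyperelliptic case, the inversion $(L,D) \longmapsto (L^\vee, D^\vee)$. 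It therefore suffices to show that $\mu$ carries each of these generators to an algebraic automorphism of $\mathcal{M}_R$ and then to read off the resulting subgroup.

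First I would treat the three families of generators. As $\mu$ is a group homomorphism, a translation of $\mathcal{M}_C$ by $m$ is carried to the translation of $\mathcal{M}_R$ by $\mu(m)$; these are algebraic, and by surjectivity of $\mu$ they exhaust the translation group $\mathcal{M}_R$. The group inversion of $\mathcal{M}_C$ is carried to the group inversion $\phi \longmapsto \phi^{-1}$ of $\mathcal{M}_R$, which on $(\mathbb{C}^*)^{2g}$ is the algebraic map $z \longmapsto z^{-1}$. The essential computation concerns $\rho_C(h)$: if $(L,D)$ has monodromy $\phi$, then parallel transport in $h^*L$ along a loop $\gamma$ agrees with parallel transport in $L$ along $h \circ \gamma$, so $(h^*L, h^*D)$ has monodromy $\phi \circ h_*$, where $h_*$ is the action of $h$ on $H_1(X, \mathbb{Z})$. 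Hence $\mu \circ \rho_C(h) \circ \mu^{-1}$ is the automorphism $\phi \longmapsto \phi \circ h_*$ of $\operatorname{Hom}(H_1(X,\mathbb{Z}), \mathbb{C}^*)$, which is exactly the algebraic automorphism attached to $h_* \in \Gamma$ in Lemma~\ref{lem2}.

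With these identifications the semi-direct products transport directly. In the non-hyperelliptic case, $\mathcal{G} = \mathcal{M}_C \rtimes G$ is carried to $\mathcal{M}_R \rtimes G_M$, since $G$ (generated by $\rho_C(\operatorname{Aut}(X))$ and the inversion) maps precisely onto $G_M$. In the hyperelliptic case, $\mathcal{G}_0 = \mathcal{M}_C \rtimes \rho_C(\operatorname{Aut}(X))$ is carried to $\mathcal{M}_R \rtimes h(\operatorname{Aut}(X)/\operatorname{Aut}^0(X))$. The only genuine subtlety, and the step I would check most carefully, is the hyperelliptic case: there the hyperelliptic involution $\iota$ acts as $-\operatorname{id}$ on $H_1(X,\mathbb{Z})$, so by the previous paragraph $\rho_C(\iota)$ is carried to $\phi \longmapsto \phi \circ (-\operatorname{id}) = \phi^{-1}$, that is, to the inversion of $\mathcal{M}_R$. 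Thus the inversion already belongs to $h(\operatorname{Aut}(X)/\operatorname{Aut}^0(X))$, so no separate $\mathbb{Z}_2$ factor appears and the two stated descriptions coincide. Granting this, both the algebraicity assertion and the group identifications follow at once.
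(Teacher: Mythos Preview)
Your proof is correct and follows essentially the same approach as the paper: invoke Theorem~\ref{thm2} to reduce to the three families of generators (translations, $\rho_C(\operatorname{Aut}(X))$, inversion), then check that under the monodromy identification each becomes an algebraic automorphism of $\mathcal{M}_R$ lying in the stated subgroup. The one place where the paper works a bit harder is in separately verifying that the resulting homomorphism into $\operatorname{Aut}(\mathcal{M}_R)$ is injective (via the induced action on $H_1(X,\mathbb{Z})$ and a Torelli-type fact); you sidestep this by framing everything as conjugation by the bijection $\mu$, which makes injectivity automatic and is a mild simplification.
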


\begin{proof}
Recall from Theorem \ref{thm2} that $\operatorname{Aut}_\theta ({\mathcal M}_C)$ is generated by the translation action of~$\mathcal{M}_C$ on itself together with the action of $\operatorname{Aut}(X)$ and the inversion $(L  ,D)  \longmapsto  (L^\vee  ,D^\vee)$ of the group~${\mathcal M}_C$. In the case that $X$ is hyperelliptic the action of inversion coincides with the hyperelliptic involution, so may be omitted. As abstract groups, $\mathcal{M}_C$ and $\mathcal{M}_R$ are isomorphic, so the translation action of $\mathcal{M}_C$ coincides with the translation action of~$\mathcal{M}_R$, hence is algebraic with respect to~$\mathcal{M}_R$. It is also clear that the action of $\operatorname{Aut}(X)$ together with the inversion $(L  ,D)  \longmapsto  (L^\vee  ,D^\vee)$ act on $\mathcal{M}_R = \operatorname{Hom}(H_1(X,  {\mathbb Z}),  {\mathbb C}^*)$ as a subgroup of $\Gamma_{\rm Sp}$, hence are also algebraic with respect to~$\mathcal{M}_R$. This proves the claim that any $\tau \in  \operatorname{Aut}_\theta ({\mathcal M}_C)$ acts as an algebraic automorphism of~$\mathcal{M}_R$ and hence def\/ines natural homomorphism $j\colon \operatorname{Aut}_\theta ({\mathcal M}_C) \to \operatorname{Aut}(\mathcal{M}_C)$.

We claim that $j$ is injective. For this note that the restriction of $f$ to $G_M$ def\/ines a homomorphism $f \colon G_M \to \Gamma_{\rm Sp}$ which sends an automorphism of $X$ to its induced action on $H_1(X,\mathbb{Z})$ and sends the inversion map to $-\operatorname{Id}$. If $X$ is not hyperelliptic, then the composition $f \circ h \colon \operatorname{Aut}(X)/\operatorname{Aut}^0(X) \times \mathbb{Z}_2 \mapsto \Gamma_{\rm Sp}$ is injective and if $X$ is hyperelliptic then $f \circ h|_{\operatorname{Aut}(X)/\operatorname{Aut}^0(X)} \colon \operatorname{Aut}(X)/\operatorname{Aut}^0(X) \to \Gamma_{\rm Sp}$ is injective (for $g=1$ this is trivial, while for $g \ge 2$ this follows from, e.g.,~\cite[Section~V.2]{FK}). This proves the claim that $j$ is injective and that the image of~$j$ is ${\mathcal M}_R\rtimes G_M$ if $X$ is not hyperelliptic and ${\mathcal M}_R\rtimes h(\operatorname{Aut}(X)/\operatorname{Aut}^0(X))$ if~$X$ is hyperelliptic.
\end{proof}

\section{Automorphisms of moduli space of Higgs line bundles}\label{sec5}

The moduli space of Higgs line bundles on $X$ of degree zero is the Cartesian
product
\begin{gather*}
{\mathcal M}_H =  J(X)\times H^0(X,  K_X)  .
\end{gather*}
Let $\operatorname{Aut}({\mathcal M}_H)$ denote the group of all algebraic automorphisms
of the variety ${\mathcal M}_H$.

\begin{Lemma}\label{lH}
Any $f \in  \operatorname{Aut}({\mathcal M}_H)$ is of the form
\begin{gather*}
f =  f_1\times f_2  ,
\end{gather*}
where $f_1 \in  \operatorname{Aut}(J(X))$ and $f_2 \in  \operatorname{Aut}(H^0(X,  K_X))$.
\end{Lemma}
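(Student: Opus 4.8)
The plan is to play off the two contrasting features of the factors of ${\mathcal M}_H = J(X) \times V$, where I abbreviate $V := H^0(X,  K_X) \cong {\mathbb C}^g$: the Jacobian $J(X)$ is an abelian variety, hence projective and in particular complete, whereas $V$ is affine space. Writing $\pi_J$, $\pi_V$ for the two projections of ${\mathcal M}_H$, I would first show that the $V$-component of $f$ factors through $\pi_V$ (using completeness of $J(X)$), and then that the $J(X)$-component factors through $\pi_J$ (using the absence of nonconstant maps from affine space to an abelian variety). Together these give $f = f_1 \times f_2$.

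For the first step I would fix $v \in V$ and restrict $\pi_V \circ f$ to the slice $J(X) \times \{v\}$. This is a morphism from the complete connected variety $J(X)$ to the affine variety $V \cong {\mathbb C}^g$, and since every global regular function on a complete connected variety is constant, any such morphism is constant. Hence $\pi_V \circ f$ depends only on the $V$-coordinate, i.e., $\pi_V \circ f = f_2 \circ \pi_V$ for a unique morphism $f_2 \colon V \to V$. Running the same argument for $f^{-1}$ gives $\pi_V \circ f^{-1} = f_2' \circ \pi_V$, and composing the two identities forces $f_2' \circ f_2 = \operatorname{Id}_V = f_2 \circ f_2'$ (as $\pi_V$ is surjective); thus $f_2 \in \operatorname{Aut}(V)$, and $f$ carries each slice $J(X) \times \{v\}$ isomorphically onto $J(X) \times \{f_2(v)\}$.

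For the second step I would write $f(L,v) = (g(L,v),  f_2(v))$ with $g \colon J(X) \times V \to J(X)$ a morphism, fix $L \in J(X)$, and restrict $g$ to $\{L\} \times V \cong {\mathbb C}^g$. As recalled in the proof of Theorem~\ref{thm1}, there is no nonconstant algebraic map from ${\mathbb C}$ to an abelian variety; a nonconstant morphism ${\mathbb C}^g \to J(X)$ would restrict to one on a suitable affine line, so no such morphism exists and $g|_{\{L\} \times V}$ is constant. Therefore $g(L,v) = f_1(L)$ is independent of $v$. Since by the first step the slice $J(X) \times \{v_0\}$ maps isomorphically onto $J(X) \times \{f_2(v_0)\}$, the map $f_1 = g(\cdot,  v_0) \colon J(X) \to J(X)$ is an isomorphism, i.e., $f_1 \in \operatorname{Aut}(J(X))$. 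Combining, $f(L,v) = (f_1(L),  f_2(v))$, the asserted decomposition $f = f_1 \times f_2$.

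The argument is short, and its only structural input is this pair of rigidity facts. The main thing to get right is the bookkeeping that makes $f_2$ an honest automorphism and guarantees that slices are sent to slices, which is why I would apply the first-step reasoning symmetrically to $f^{-1}$ before extracting $f_1$; beyond that I anticipate no real obstacle, provided each factor is matched with the correct completeness/affineness property.
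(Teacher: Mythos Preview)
Your proof is correct and follows essentially the same approach as the paper: both arguments exploit the pair of rigidity facts that (i) every morphism from the complete variety $J(X)$ to the affine space $V$ is constant and (ii) every morphism from affine space to the abelian variety $J(X)$ is constant, to force $f$ to respect the product decomposition. The only cosmetic differences are that you treat the $V$-component first and the $J(X)$-component second (the paper does the reverse), and you spell out explicitly, via the symmetric argument applied to $f^{-1}$, why $f_2$ is an automorphism, whereas the paper simply asserts this.
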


\begin{proof}
Let
\begin{gather*}\phi_1  \colon \  {\mathcal M}_H =  J(X)\times H^0(X,  K_X) \longrightarrow
J(X)  , \\
 \phi_2  \colon \  J(X)\times H^0(X,  K_X) \longrightarrow
H^0(X,  K_X)
\end{gather*}
be the natural projections. As noted before, there are no nonconstant
algebraic maps from $H^0(X,  K_X)$ to $J(X)$. So given $f$, there is a
unique automorphism
\begin{gather*}f_1 \in  \operatorname{Aut}(J(X))\end{gather*}
such that $f_1\circ\phi_1 =  \phi_1\circ f$.

Note that given $v \in  H^0(X,  K_X)$, one can def\/ine the map
\begin{align*}
\widehat{v} \colon \ J(X)  &\longrightarrow H^0(X, K_X), \\
 z &\longmapsto \phi_2(f(z,v)),
\end{align*}
 which is a constant map since it is holomorphic. Denoting by $v' \in  H^0(X,  K_X)$ the constant image of $\widehat{v}$, one can see that
\begin{align*}
f_2 \colon \  H^0(X, K_X)&\longrightarrow   H^0(X,  K_X),  \\
v&\longmapsto   v'
\end{align*}
is an automorphism, and thus one has that $f  =  f_1\times f_2$.
\end{proof}

The moduli space ${\mathcal M}_H$ can be naturally identif\/ied with the cotangent
bundle of~$J(X)$, hence it carries a canonical symplectic form~$\theta$. Let $\operatorname{Aut}_\theta({\mathcal M}_H)$ be the subgroup of $\operatorname{Aut}({\mathcal M}_H)$ preser\-ving~$\theta$. Recall that $\Omega_{J(X)}$ denotes the holomorphic cotangent bundle of
$J(X)$ and that there is a~naturally def\/ined isomorphism $H^0( X,  K_X ) =  H^0( J(X),
\Omega_{J(X)} )$.

\begin{Theorem}\label{thm4}
The group $\operatorname{Aut}_\theta({\mathcal M}_H)$ is the semi-direct product
\begin{gather*}
H^0( J(X) ,  \Omega_{J(X)} ) \rtimes \operatorname{Aut}( J(X) )  ,
\end{gather*} where
$\operatorname{Aut}( J(X) )$ acts on $H^0( J(X) , \Omega_{J(X)} )$ by $f \cdot \alpha  =
(f^{-1})^*(\alpha)$, for $f  \in  \operatorname{Aut}(J(X))$, $\alpha  \in  H^0(J(X) ,
  \Omega_{J(X)} )$.
\end{Theorem}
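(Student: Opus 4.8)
The plan is to begin from Lemma~\ref{lH}, writing any $f \in \operatorname{Aut}({\mathcal M}_H)$ as $f = f_1\times f_2$ with $f_1\in\operatorname{Aut}(J(X))$ and $f_2\in\operatorname{Aut}(H^0(X,K_X))$, and then to impose the condition $f^*\theta=\theta$ in order to determine exactly which pairs $(f_1,f_2)$ arise. The essential preliminary step is to write the canonical symplectic form in the natural trivialization of the cotangent bundle. Set $V:=\operatorname{Lie}(J(X))=H^1(X,{\mathcal O}_X)$ and $W:=H^0(X,K_X)=V^\vee$, so that ${\mathcal M}_H=T^\vee J(X)=J(X)\times W$ via the invariant trivialization of the cotangent bundle of the abelian variety $J(X)$. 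Choosing coordinates $\xi_1,\dots,\xi_g$ on $W$ and letting $dz_1,\dots,dz_g$ be the corresponding translation-invariant holomorphic $1$-forms on $J(X)$ (a basis of $H^0(J(X),\Omega_{J(X)})=W$), the Liouville $1$-form is $\lambda=\sum_i \xi_i\,dz_i$. Since the forms $dz_i$ are closed, being invariant forms on a torus, the canonical symplectic form is the constant-coefficient $2$-form
\[
\theta=d\lambda=\sum_i d\xi_i\wedge dz_i,
\]
which pairs the $W$-directions nondegenerately against the $J(X)$-directions and has no pure $dz_j\wedge dz_k$ or $d\xi_j\wedge d\xi_k$ terms.

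Next I would compute $f^*\theta$. Because $J(X)$ is an abelian variety, $f_1$ is a translation composed with a group automorphism, so its effect on the invariant forms is through a constant invertible matrix $A=(a_{ij})$, the translation part acting trivially; thus $f^*(dz_i)=\sum_j a_{ij}\,dz_j$. On the other factor $f^*(d\xi_i)=\sum_k \big(\partial (f_2)_i/\partial\xi_k\big)\,d\xi_k$, where the $(f_2)_i$ are the a priori polynomial components of $f_2$. Substituting gives
\[
f^*\theta=\sum_{j,k}\Big(\sum_i a_{ij}\,\partial (f_2)_i/\partial\xi_k\Big)\,d\xi_k\wedge dz_j,
\]
so that $f^*\theta=\theta$ is equivalent to the matrix identity $A^{T}\cdot\big(\partial (f_2)_i/\partial\xi_k\big)=I$.

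The crucial consequence, and the main point of the proof, is that this identity forces the Jacobian of $f_2$ to be the \emph{constant} matrix $(A^{T})^{-1}$. Hence $f_2$ is necessarily affine, $f_2(\xi)=(A^{T})^{-1}\xi+c$ for some $c\in W$, even though a priori $f_2$ could have been an arbitrary (possibly wild) polynomial automorphism of ${\mathbb C}^g$. Thus preserving $\theta$ imposes precisely two conditions: that $f_2$ be affine, and that its linear part be determined by $f_1$. A short computation shows that $(A^{T})^{-1}$ is exactly the matrix, in the basis $\{dz_i\}$, of the map $(f_1^{-1})^*$ acting on $W=H^0(J(X),\Omega_{J(X)})$. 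The translation vector $c\in W$ and the automorphism $f_1$ (including its own translation part) remain entirely free.

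Finally I would verify the group structure. The assignment $f_1\mapsto (f_1^{-1})^*$ is a homomorphism $\operatorname{Aut}(J(X))\to\operatorname{GL}(W)$, and composing the admissible automorphisms attached to $(c,f_1)$ and $(c',f_1')$ yields the pair $(c+(f_1^{-1})^*c',\,f_1 f_1')$. This is exactly the multiplication law of the semidirect product $H^0(J(X),\Omega_{J(X)})\rtimes\operatorname{Aut}(J(X))$ with the action $f\cdot\alpha=(f^{-1})^*\alpha$, which identifies $\operatorname{Aut}_\theta({\mathcal M}_H)$ with the claimed group. I expect the main obstacle to be the passage from the matrix identity to the affineness of $f_2$: ruling out wild polynomial automorphisms of ${\mathbb C}^g$ is exactly what must be argued, and it works only because $\theta$ is a constant-coefficient nondegenerate form in the invariant trivialization and $f_1$ acts on invariant $1$-forms through a constant linear map, both of which rely on $J(X)$ being an abelian variety.
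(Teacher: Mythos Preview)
Your proposal is correct and follows essentially the same approach as the paper: invoke Lemma~\ref{lH} to split $f=f_1\times f_2$, use that the derivative of $f_1$ is constant (since $J(X)$ is an abelian variety), and then impose $f^*\theta=\theta$ to force the Jacobian of $f_2$ to equal $(A^\vee)^{-1}$, so that $f_2$ is affine with linear part determined by $f_1$ and free translation part. The only difference is cosmetic: you carry out the computation in explicit coordinates and spell out the semi-direct product multiplication, whereas the paper phrases the same steps in coordinate-free language and asserts the final group structure ``follows easily''.
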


\begin{proof}
By Lemma~\ref{lH}, any automorphism of $T^\vee J(X)  =  J(X) \times H^0( J(X),
\Omega_{J(X)})$ has the form $f(x,y) =  ( f_1(x) , f_2(y) )$ for $f_1  \in
\operatorname{Aut}(J(X))$, $f_2  \in  \operatorname{Aut}( H^0(J(X) , \Omega_{J(X)})$. Since $f_1
 \in  \operatorname{Aut}(J(X))$, the derivative
$(f_1)_*(x) \colon H^0(J(X) ,  \Omega_{J(X)})^\vee  \longrightarrow
 H^0( J(X) ,  \Omega_{J(X)})^\vee$ is
independent of $x$ and will be denoted by $A$. Then, it is clear that $f_1 \times f_2$
preserves the symplectic form on $T^\vee J(X)$ if and only if $(f_2)_*(y)  =  (A^\vee)^{-1}$
for all $y$. Thus $f_2$ is an af\/f\/ine transformation of the form $f_2(y) =  (A^\vee)^{-1}y
+ y_0$, for some $y_0  \in  H^0( J(X) ,  \Omega_{J(X)})$. So $f_1 \times f_2$ is the
composition of $(f_1^*)^{-1}\colon T^\vee J(X) \longrightarrow  T^\vee J(X)$ with a translation
by $y_0$ in the f\/ibers of $T^\vee J(X) \longrightarrow  J(X)$. It follows easily that $\operatorname{Aut}_\theta({\mathcal
M}_H)$ is the semi-direct product $H^0( J(X) , \Omega_{J(X)}) \rtimes \operatorname{Aut}( J(X))$.
\end{proof}

\subsection*{Acknowledgements}

We thank the referees for helpful comments.
The f\/irst author is supported by a J.C.~Bose Fellowship. The third author
is partially supported by NSF grant DMS-1509693.

\pdfbookmark[1]{References}{ref}
\LastPageEnding

\end{document}